\documentclass[10pt]{article}%
\usepackage{graphicx}
\usepackage{pdfsync}
\usepackage{amssymb,amsthm,amsmath}
%\usepackage{epstopdf}
%\usepackage{showkeys}
%\usepackage{pdfsync}

%%%%%%%%%%%%%%%%%%%%%%%%%%%%%%%%%%%%%%%%%%%%%%%%%%%%%%%%
\usepackage{color}

%%%%%%%%%%%%%%%%%%%%%%%%%%%%%%%%%%%%%%%%%%%%%%%%%%%%%%%%%%%%%%%%%%%%

\DeclareGraphicsRule{.tif}{png}{.png}{`convert #1 `dirname #1`/`basename #1 .tif`.png}

\title{Homogenization of ferromagnetic energies\\ on Poisson random sets  in the plane}
\author{
{\sc Andrea Braides
%\footnote{corresponding author -- e-mail: {\tt braides@mat.uniroma2.it}
%tel.: (+39) 0672574688, fax:  (+39) 0672574699}
}
\\ Dipartimento di Matematica,
 Universit\`a di Roma `Tor Vergata'\\
via della Ricerca Scientifica, 00133 Rome, Italy\\
\and
{\sc Andrey Piatnitski}
\\
The Arctic University of Norway, UiT,  Campus
Narvik,\\ P.O. Box 385, Narvik 8505, Norway \\ and \\ Institute for Information Transmission Problems
of RAS,\\ 127051 Moscow, Russia %\ \ \ \ email: \emph{apiatnitski@gmail.com}
\\
%\\ Department of Mathematics, Narvik University College,
%\\ HiN, Postbox 385, 8505 Narvik, Norway
%\\ and P.N. Lebedev Physical Institute
%\\ RAS 53 Leninski prospect, Moscow 119991, Russia
}
\date{%\today
}                                      % Activate to display a given date or no date

\newtheorem{definition}{Definition}[section]
\newtheorem{lemma}[definition]{Lemma}
\newtheorem{theorem}[definition]{Theorem}
\newtheorem{proposition}[definition]{Proposition}

\newtheorem{remark}[definition]{Remark}

\def\e{\varepsilon}

\def\ZZ{\mathbb{Z}}
\def\NN{\mathbb{N}}
\def\N{{\mathcal N}}\def\E{{\mathcal E}}\def\V{{\mathcal V}}
\def\rr{\mathbb{R}}
\def\HH{\mathcal{H}}
\def\I{{\mathcal I}}
\def\wto{\rightharpoonup}
\def \trait (#1) (#2) (#3){\vrule width #1pt height #2pt depth #3pt}
\def \qed{\hfill
        \trait (0.1) (6) (0)
        \trait (6) (0.1) (0)
        \kern-6pt
        \trait (6) (6) (-5.9)
        \trait (0.1) (6) (0)
\medskip}

\begin{document}
\maketitle

\noindent
{\bf Abstract.} We prove that by scaling nearest-neighbour ferromagnetic energies defined on Poisson random sets in the plane
we obtain an isotropic perimeter energy with a surface tension characterised by an asymptotic formula. The result relies on proving that cells with `very long' or `very short' edges of the corresponding Voronoi tessellation can be neglected. In this way we may apply Geometry Measure Theory tools to define a compact convergence, and a characterisation of metric properties of clusters of Voronoi cells using limit theorems for subadditive processes.
\bigskip

\noindent
{\bf Keywords.} Poisson random sets, ferromagnetic energies, homogenization, first-passage percolation, sets of finite perimeter, interfacial energies, $\Gamma$-convergence.

\section{Introduction}
In this paper we study a prototypical model of pair-interaction energies on Poisson random sets in the plane. These energies are a random extension of nearest-neighbour `ferromagnetic' systems defined on Bravais lattices, whose overall behaviour is that of an interfacial energy \cite{CDL,ABC}. The possibility of lattice approximations for arbitrary interfacial energies 
makes the analysis of ferromagnetic energies relevant for numerical approximations and modeling issues (we refer to \cite{BK} for optimal constructions on regular lattices, available even with constraints on the interaction potentials). Surface energies are an important building block in the study of general functionals defined on more complex spaces of functions of bounded variation, passing through the generalization to functions with a discrete number of values \cite{AB} and using the latter to approximate arbitrary functions by coarea-type arguments (see e.g.~\cite{A}). Furthermore, the study of energies involving bulk and surface part can often be decoupled in the analysis of each part, which justifies the analysis of surface energies  separately also in that context (see \cite{BDV,DZ} and the recent advances in the analysis and derivation of complex energies from discrete systems in \cite{BBC,BBZ}). The present contribution can be then viewed as a step towards the extension of the analysis of discrete systems producing bulk and surface integrals to general random distribution of points. The simplest case of parameters taking only two values (equivalently, characteristic functions) will allow us to concentrate on the basic geometric features of the underlying discrete environment.

Discrete energies with randomness producing surface effects have been previously considered under various hypotheses. Results on regular lattices with random interactions comprise: random weak membrane models in \cite{BP-ARMA}, random ferromagnetic energies with positive coefficients in \cite{BP-JFA} and
ferromagnetic energies with a random distribution of degenerate coefficients in \cite{BP-JOSS}. Stochastic lattices have been considered under the hypothesis that sites be distributed in such a way that no `great holes' of `concentration of sites' may occur, so that we obtain uniform upper and lower estimates for the size of the Voronoi cells of the underlying tessellation, which implies that these lattices can be treated in average as a regular periodic lattice (see \cite{BLB,ACG,ACR,BCR}). Our focus is precisely in avoiding such an hypothesis considering points distributed according to a Poisson point process in the plane ({\em Poisson random set}\,). We denote by $\N$ such a set of points and by $\E$ the set of the edges of the underlying Delaunay triangulation, which are identified with pairs of points $(i,j)$ in $\N\times \N$ (the {\em nearest neighbours} in $\N$). The energy we consider can be viewed as defined on subsets $\mathcal I$ of $\N$ by
\begin{equation}
E({\mathcal I})=\#\{(i,j)\in\E: i\in{\mathcal I}, j\not\in{\mathcal I}\}.
\end{equation}
Note that the same energy can be interpreted as the number of edges of the boundary of the set
\begin{equation}
A_{\mathcal I}=\bigcup_{i\in{\mathcal I}} C_i,
\end{equation}
where $C_i$ is the cell of the Voronoi tessellation containing the point $i\in\N$. Another way to write the same energy
is by identifying each set ${\mathcal I}$ with a (scalar) spin function parameterized by indices in $\N$ and defined by
$u^\I_i=1$ if $i\in \I$ and $u^\I_i=-1$ if $i\not\in \I$, so that we may rewrite $E({\mathcal I})$ as depending on $u^\I$, setting
\begin{equation}
E(u^\I)={1\over 8}\sum_{i,j\in\N}(u_i-u_j)^2={1\over 4}\sum_{i,j\in\N}|u_i-u_j|,
\end{equation}
the factors coming from double counting and the fact that $|u_i-u_j|\in\{0,2\}$. Conversely, we may take this as the definition of the energy, and correspondingly pass to subsets of $\N$ by noting that $E(\I_u)=E(u)$, where $\I_u=\{I\in\N: u_i=1\}$.

In order to describe the overall properties of $E$ we perform a discrete-to-continuum analysis through a scaling procedure.
We intoduce a small parameter $\e>0$ and consider the scaled energy $E_\e$ defined on subsets of $\e\E$ by
\begin{equation}
E_\e({\mathcal I})=\e\,\#\{(i,j)\in\e\E: i\in{\mathcal I}, j\not\in{\mathcal I}\},
\end{equation}
which again can be interpreted as $\e$ times the number of edges of the boundary of the scaled set
\begin{equation}
A^\e_{\mathcal I}=\e \bigcup_{i\in{\mathcal I}} C_{i/\e}.
\end{equation}

Note that if we had a uniform upper and lower bound of the size of each of these edges,
then $E_\e({\mathcal I})$ would be comparable with the perimeter of $A^\e_{\mathcal I}$.
In that case, given a family ${\mathcal I}_\e$ with equibounded $E_\e({\mathcal I})$,
the sets $A_\e=A^\e_{\mathcal I_\e}$ would be (locally) precompact in the sense of sets of finite perimeter; i.e.,
there would exist a set of finite perimeter $A$ such that, up to subsequences,
$|(A_\e\triangle A) \cap Q|\to 0$ for any cube $Q$.

For Poisson random sets, the edges of Voronoi cells do not satisfy a uniform estimate. Nevertheless,
very long or very short edges are in a sense negligible. Indeed, a result by Pimentel \cite{Pi}
implies that a path in which a large proportion of such sets is present must be `short', and hence,
by an isoperimetric argument encircle a `small' set. Using this result, we can show that if
$E_\e({\mathcal I})$ is equibounded and $A_\e$ are defined above, then there exists families
of sets $B'_\e$ and $B''_\e$ such that $|B'_\e\cup B''_\e|\to 0$ and the perimeter of the sets
\begin{equation}
(A_\e\cup B'_\e)\setminus B''_\e
\end{equation}
is equibounded. We deduce then that, up to subsequences, $A_\e$ still (locally) converge
to a set of finite perimeter $A$.

We can then characterize the behaviour of the energies $E_\e$ by computing their $\Gamma$-limit with respect to this convergence. Note that, by the isotropy of Poisson random sets, if the limit is of perimeter type, it must be of the form
\begin{equation}
F(A)=\tau_0\,\HH^1(\partial A);
\end{equation}
i.e., a constant $\tau_0$ (the {\em surface tension}) times the perimeter of $A$ (in this notation $\partial A$ denotes the
{\em reduced boundary} of $A$). The main issue is then to characterize such $\tau_0$ so as to adapt the discrete-to-continuum technique of \cite{BP-JFA,BP-JOSS} to this case. A central observation is that the union of the boundaries of all Voronoi cells $C_i$ for which we have suitable outer and inner bounds determine a set which possesses a unique infinite connected component. We then introduce a parameter $\alpha>0$ that quantifies these bounds so that they become less and less stringent when $\alpha\to 0$. We denote by $\V_\alpha$ the union of boundaries of such Voronoi cells.
The properties of $\V_\alpha$ are derived from percolation argument as in \cite{BP-JOSS,BP-JFA,BP-ARMA}, and can be used to prove that a first-passage percolation formula holds for paths in $\V$ and at the same time permit to use the blow-up technique \cite{FM,BMS} for proving a lower bound.
%We may then study the overall metric properties of the set $\e\V_\alpha$, whch allow to provide upper bounds for $\tau$.
%, which turns out again to be isotropic by construction and is a multiple by some $\tau_\alpha$ of the Euclidean metric.
%This $\tau_\alpha$ is characterised by an asymptotic formula obtained using limit results for subadditive processes \cite{AK}.
%Finally, we define $\tau=\lim\limits_{\alpha\to 0} \tau_\alpha$.}
%The proof that $\tau$ is the desired surface tension can now be obtained on the one hand computing a lower bound with $\tau_\alpha$ following the blow-up technique \cite{FM,BMS}, and on the other hand by showing an upper bound by a discrete construction using the subadditive properties of the problems defining $\tau$.
An upper bound is finally shown by using the subadditive properties of the problems defining $\tau_0$.
%using an approximate construction with sets whose boundary lies in $\e\V_\alpha$ for $\alpha$ small.

It is worth noting that some of the results extend to arbitrary dimension (mainly, the compactness lemma for sets with equibounded energy), but the properties of regular Voronoi cells as stated and the characterization of $\tau_0$ with a first-passage percolation formula are particular of the planar case. The treatment of the asymptotic analysis of the energies in higher dimension will require different tools and homogenization formulas, which justify a separate treatment.

\section{Notation and statement of the results}

${\cal L}^2(A)$ or $|A|$ denotes the $2$-dimensional Lebesgue measure of a set $A$, ${\bf 1}_A$ the characteristic function of the set $A$,
$Q=[-1/2,1/2]^2$ the unit cube in $\rr^2$.

\subsection{Poisson random sets}

$\N$ denotes a {\em Poisson random set} with {\em intensity $\lambda>0$} in $\mathbb R^2$ defined on a probability space $(\Omega,\mathcal{F},\mathbf{P})$. We refer for instance to \cite{DV_J} for the definition of a Poisson random set and its main properties.  We recall that $\N$ is almost surely a locally finite subset of $\mathbb R^2$ such that\\
$\bullet$ for any bounded Borel set $B\subset\mathbb R^2$ the number of points in $B\cap \N$ has a Poisson law with parameter $\lambda|B|$
$$
\mathbf{P}\{\#(B\cap\N)=n\}=e^{-\lambda|B|}\frac{(\lambda|B|)^n}{n!};
$$
$\bullet$ for any collection of bounded disjoint Borel subsets in $\mathbb R^2$ the random variables defined as the number
of points of $\N$  in these subsets are independent.

We also assume that the probability space is equipped with a dynamical system $T_x:\Omega\mapsto\Omega$, $x\in\mathbb R^2$,
and that for any bounded Borel set $B$ and any $x\in\mathbb  R^2$ we have  $\#\big((B+x)\cap\N\big)(\omega)
=\#\big(B\cap\N\big)(T_x\omega)$.  We recall that $T_x$ is a group of measurable measure preserving transformations in $\Omega$,
also measurable as a function $T_\cdot\,:\,\Omega\times\mathbb R^2\mapsto\Omega$. We  suppose that $T_x$ is ergodic. 
Further details can be found for instance in \cite[Chapter 7]{JKO}.

 In what follows, we only consider a Poisson random set with intensity $1$, since the results for as Poisson random set with intensity $\lambda$ may be obtained by considering the case with intensity $1$ and then apply a scaling transformation $\N\,\longrightarrow \sqrt{\lambda}\N$.

The cells of the {\em Voronoi tessellation} of $\N$ are denoted by
$$C_i:=\{x\in\rr^2:|x-i|\le |x-j| \hbox{ for all } j\in\N\}
$$
 with $i\in\N$. Each $C_i$ thus defined is a polyhedral set; the set of edges of the Voronoi cells is denoted by $\cal V$. The set of the vertices of $C_i$ (or endpoints of elements of $\cal V$) is denoted by $\N^*$

Note that we may assume that each point in $\rr^2$ belongs to at most three Voronoi cells or three elements of $\E$, since this is an event of probability $1$.

The set of edges of the {\em Delaunay triangulation} of $\N$ is denoted by $\E$ and is identified with the set of pairs $(i,j)$ in $\N\times\N$ such that $C_i$ and $C_j$ share a common edge.

We define a {\em path} of Voronoi cells as a collection $\{C_{i_j}: 1\le j\le K\}$ such that $C_{i_j}$ and $C_{i_{j+1}}$
have an edge in common, or, equivalently, such that $(i_j, i_{j+1})\in\E$ for all $j\in\{1,\ldots, K-1\}$.
From the latter standpoint, we also talk of a path in $\E$.
%We say that such a path connects two points $x$ and $y$ if $x\in C_1$ and $y\in C_K$.
We say that such a path connects two sets $X$ and $Y$ if $X\cap C_1\neq \emptyset$ and $Y\cap C_K\neq \emptyset$.
If $X=\{x\}$ and $Y=\{y\}$ then we simply say that the path connects $x$ and $y$.

\subsection{Asymptotic behaviour of ferromagnetic energies on Poisson random sets}
For future reference and comparison with the existing literature, we state our results in terms of energies on (scalar) spin functions,
keeping in mind the possible alternative formulations as energies on sets or on set of points.
The (scaled) ferromagnetic energy of the Poisson random set is defined on spin functions $u:\e\N\to\{-1,1\}$ by
\begin{eqnarray}\label{energies}
\nonumber
E_\e(u)&=&{1\over 8}\sum_{(i,j)\in\e\E}\e (u_i-u_j)^2\\&=&\nonumber
{1\over 2}\e\, \#\{(i,j)\in \e\E: u_i\neq u_j\}
\\ \ \nonumber\\
&=&\e\, \#\{(i,j)\in \e\E: u_i=1, u_j=-1\},
\end{eqnarray}
where the scaling factor $1\over 8$ is due to double counting and to the fact that $(u_j-u_j)^2\in\{0,4\}$.

To each $u:\e\N\to\{-1,1\}$ we associate the (scaled) {\em Voronoi set of $u$} defined by
\begin{equation}
V_\e(u)=\bigcup_{\{i: u_i=1\}} \e\, C_{i/\e},
\end{equation}
and the {\em piecewise-constant interpolation} (with underlying set $\e\N$), still denoted $u:\rr^2\to\{-1,1\}$, defined by
\begin{equation}
u(x)=\begin{cases} 1 & \hbox{if } x\in V_\e(u)\\
-1 & \hbox{if } x\in\rr^2\setminus V_\e(u).
\end{cases}
\end{equation}

\begin{definition}\label{conv-L1}
We say that a family $u^\e:\e\N\to\{-1,1\}$ {\em converges to a set $A$} if the piecewise-constant interpolations $u^\e$ converge to the function ${\bf 1}_A-{\bf 1}_{\rr^2\setminus A}$ locally in $L^1(\rr^2)$, or, equivalently, if ${\bf 1}_{V_\e(u^\e)}$ converge to ${\bf 1}_A$  locally in $L^1(\rr^2)$.
\end{definition}

The following compactness lemma justifies the use of the convergence in Definition \ref{conv-L1} in the computation of the $\Gamma$-limit of $E_\e$ \cite{GCB,Handbook}. Note that the result cannot be directly deduced from the compactness property of sets of equibounded perimeter, since we cannot deduce the equiboundedness of the perimeters of the sets $V_\e(u^\e)$ from the equiboundedness of $E_\e(u_\e)$.

\begin{lemma}[compactness]\label{set-compactness}
Let $u^\e$ be such that $\sup_\e E_\e(u^\e)<+\infty$. Then, up to subsequences $u^\e$ converges to some set $A$ in the sense of Definition {\rm\ref{conv-L1}}. Moreover, the limit set is a set of finite perimeter.
\end{lemma}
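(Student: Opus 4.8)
The plan is to extract a limit set $A$ by a standard diagonal/compactness argument for sets of finite perimeter, after first repairing the sets $V_\e(u^\e)$ on a vanishing-volume set so that the repaired sets have equibounded perimeter. The starting point is the observation made in the introduction: although the edges of the Voronoi tessellation of a Poisson random set do not obey uniform two-sided length bounds, the cells whose boundary contains a ``very long'' or ``very short'' edge are, in a quantitative sense, rare along any path. Concretely, I would invoke the cited result of Pimentel: a path in $\E$ that uses a large proportion of edges dual to atypically long/short Voronoi edges must have small Euclidean diameter (equivalently, must enclose little area), almost surely for $\e$ small. I would fix a threshold $\alpha>0$, call a Voronoi cell $\alpha$-regular if all its edges have length in a suitable interval depending on $\alpha$, and set $\V_\alpha$ to be the union of the boundaries of the $\alpha$-regular cells, as in the excerpt.

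The core estimate is the following. Write $B_\e$ for the union of the scaled non-$\alpha$-regular cells that meet $\partial V_\e(u^\e)$. The energy bound $\sup_\e E_\e(u^\e)=:C_0<\infty$ controls the number of ``active'' dual edges, i.e.\ edges $(i,j)\in\e\E$ with $u^\e_i\neq u^\e_j$; these active edges, viewed in $\V$, form the topological boundary of $V_\e(u^\e)$ and decompose into closed polygonal curves. Splitting each such curve into its pieces that lie in $\V_\alpha$ (the $\alpha$-regular part) and the complementary pieces supported on non-$\alpha$-regular cells, the $\V_\alpha$-part already has $\HH^1$-measure controlled by $C_0/(\e^{-1}\ell_{\min}(\alpha))\cdot\e = C_0\ell_{\min}(\alpha)^{-1}$-type bounds (length of a long-edge-free path is comparable to the number of its edges), while the complementary pieces are dealt with by Pimentel's lemma: each connected component of the ``bad'' part encircles a set of area $o(1)$ as $\e\to0$, and the number of such components is again controlled by $C_0$. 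I would then define $B'_\e$ (to be added) and $B''_\e$ (to be removed) as the unions of the enclosed regions, obtaining $|B'_\e\cup B''_\e|\to0$ and $\mathcal H^1\big(\partial((V_\e(u^\e)\cup B'_\e)\setminus B''_\e)\big)\le C(\alpha,C_0)$ locally uniformly in $\e$.

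Once this is in place, the conclusion is routine. The repaired sets $\widetilde V_\e:=(V_\e(u^\e)\cup B'_\e)\setminus B''_\e$ have locally equibounded perimeter, so by the compactness theorem for sets of finite perimeter there is a set $A$ of finite perimeter with $\mathbf 1_{\widetilde V_\e}\to\mathbf 1_A$ in $L^1_{\mathrm{loc}}(\rr^2)$ along a subsequence; lower semicontinuity of perimeter gives that $A$ has finite perimeter. Since $|\widetilde V_\e\triangle V_\e(u^\e)|\le|B'_\e\cup B''_\e|\to0$, also $\mathbf 1_{V_\e(u^\e)}\to\mathbf 1_A$ in $L^1_{\mathrm{loc}}$, which is exactly convergence of $u^\e$ to $A$ in the sense of Definition \ref{conv-L1}. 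A mild diagonalization over an increasing sequence of cubes handles the fact that the constants depend on the cube.

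The main obstacle is the repair estimate, i.e.\ turning Pimentel's asymptotic statement about paths through atypical Voronoi edges into the quantitative claim ``$|B'_\e\cup B''_\e|\to0$ with perimeter control''. This requires (i) the right almost-sure formulation of Pimentel's result uniformly over the relevant scales, (ii) a clean isoperimetric/encircling argument showing each bad component of $\partial V_\e(u^\e)$ bounds a region of vanishing area, and (iii) bookkeeping to ensure the number of such components and the total added perimeter stay bounded in terms of $E_\e(u^\e)$ alone, independently of $\e$ and of the particular realization $\omega$ outside a null set. Everything after that is standard GMT compactness.
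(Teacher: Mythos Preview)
Your proposal has the right skeleton (repair on a null-volume set, then apply $BV$-compactness), and this matches the paper's Lemma~\ref{Voronoi-comp}. But the repair mechanism you describe has a genuine gap, and the missing idea is precisely where the paper's proof differs from yours.

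The central problem is the claim that each connected component of the ``bad'' (non-$\alpha$-regular) portion of $\partial V_\e(u^\e)$ encircles a set of area $o(1)$. The bad pieces obtained by splitting a closed boundary curve according to $\alpha$-regularity of the adjacent cells are \emph{arcs}, not closed curves, so they do not encircle anything. Moreover, Pimentel's lemma as stated in the paper (Lemma~\ref{pimentel}) does not assert that paths through irregular cells have small diameter; it says that for any connected cluster $P$ of Voronoi cells, the number of cells in $P$ is comparable to the number $\#\mathbf A(P)$ of unit lattice cubes meeting $P$, provided one of these is at least $\e^{-\gamma}$. A cell can fail $\alpha$-regularity by having \emph{large} diameter, in which case a path through such cells is anything but short. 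So your appeal to Pimentel in step~(ii) is misdirected, and the definition of $B'_\e,B''_\e$ as ``enclosed regions'' of bad arcs is not well-posed. Even granting a repair, your claim that the $\alpha$-regular part of $\partial V_\e(u^\e)$ already controls $\HH^1(\partial\widetilde V_\e)$ overlooks that after filling, the new boundary may still run along long edges of irregular cells adjacent to the filled region.

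The paper's repair is organised differently and does not use $\alpha$-regularity at all. It classifies each \emph{entire} connected component $S$ of $\partial V_\e(u^\e)$ as small or large according to whether the associated cell cluster $P(S)$ has fewer or more than $\e^{-\gamma}$ cells. For small $S$, Pimentel gives $\#\mathbf A(P(S))\le C\e^{-\gamma}$, so the polyomino containing $S$ has boundary length $O(\e^{1-\gamma})$ and by isoperimetry encloses area $O(\e^{2-2\gamma})$; since there are at most $O(\e^{-1})$ components, $|B'_\e\cup B''_\e|=O(\e^{1-2\gamma})$. For large $S$ the paper does \emph{not} try to bound $\HH^1(\partial A_\e)$ directly (boundary cells may still have arbitrarily long edges). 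Instead it passes to the \emph{polyomino approximation} $A'_\e$ consisting of the $\e$-lattice cubes contained in $A_\e$: Pimentel gives $\HH^1(\partial A'_\e)\le C\e\sum_S\#\mathbf A(P(S))\le C\e\sum_S\#\{C_i\subset P(S)\}\le C E_\e(u^\e)$, and simultaneously $|A_\e\setminus A'_\e|\le C\e E_\e(u^\e)$. Compactness then follows from the perimeter bound on the polyomino set $A'_\e$, not on $A_\e$ itself. This polyomino-approximation step is the missing idea in your proposal.
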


%\subsection{The homogenization theorem}

The compactness lemma above ensures that the domain of the $\Gamma$-limit of $E_\e$ be the family sets of finite perimeter in $\rr^2$.
The asymptotic behaviour of $E_\e$ will be described by an asymptotic formula similar to those encountered in first-passage percolation, involving minimal paths on $\E$ between points of $\rr^2$. To that end we define for all $x\in\rr^2$
$$
\pi_0(x)=\hbox{ closest point of $\N^*$ to $x$}.
$$
For almost all $x$ this point is uniquely defined. For the remaining points we choose one of the closest points of $\N^*$ to $x$.
%
%%{\color{red} Note: here it may be useful to use the channel property to show that $m((0,0),(t,0))\le C t$.}
%It is natural to assume that $\tau$ can also be obtained as the limit as $t\to\infty$ of the normalized distance from the origin
%to $(t,0)$ in $\mathcal{E}$ with no restrictions on the corresponding Voronoi cells.
%
For $x,\,y\in\mathbb R^2$ we define
\begin{equation}\label{em0}
m_0(x,y)=\min\{\#\{e_k\}: \{e_k\} \hbox{ is a path in $\mathcal{E}$ connecting $\pi_0(x)$ and $\pi_0(y)$}\},
\end{equation}
%where $\pi_0(x)$ is the point of $\mathcal{N}^*$ for which $\mathrm{dist}(x,\pi_0(x))=\mathrm{dist}(x,\mathcal{N}^*)$.
where a path of segments (in our case edges in $\cal V$) connecting two points $\overline x$ and $\overline y$ is a collection of segments $[x_{k-1}, x_k]$ with $1\le k\le K$ for some $K\in\NN$ such that $x_0=\overline x$ and $x_K=\overline y$, and such that the related piecewise-linear curve is not self-intersecting.

\begin{theorem}[homogenization theorem]\label{HoT}
 Let $\cal E$ be a Poisson random set with intensity $1$. Then there exists a deterministic constant $\tau_0\in (0,+\infty)$ (the {\em surface tension}) such that almost surely the energies $E_\e$ defined in \eqref{energies} $\Gamma$-converge to the energy $F(A)= \tau_0\,{\cal H}^1(\partial A)$, defined on sets of finite perimeter, with respect to the convergence in Definition {\rm\ref{conv}}. Furthermore the constant $\tau_0$ satisfies
$$
\tau_0=\lim\limits_{t\to\infty}\frac{m_0\big((0,0),(t,0)\big)}t
$$
almost surely, where $m_0$ is given by \eqref{em0}.
\end{theorem}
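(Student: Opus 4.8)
The plan is to establish the homogenization theorem in three stages: (i) a compactness and lower-bound analysis for the $\Gamma$-liminf inequality via blow-up, (ii) a matching upper bound via a recovery-sequence construction exploiting subadditivity, and (iii) the identification of the limit as an isotropic perimeter with the first-passage constant $\tau_0$. First I would introduce the auxiliary random sets $\mathcal V_\alpha$ of boundaries of ``regular'' Voronoi cells (those with suitable inner and outer bounds controlled by the parameter $\alpha$), and record, citing the percolation arguments of \cite{BP-JOSS,BP-JFA,BP-ARMA}, that for every sufficiently small $\alpha$ the set $\mathcal V_\alpha$ possesses a.s.\ a unique unbounded connected component, with the complementary ``bad'' region having density tending to $0$ as $\alpha\to 0$. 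This is the structural fact that replaces the uniform geometric control available on periodic or ``well-distributed'' stochastic lattices.

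For the lower bound, take $u^\e\to A$ with $\sup_\e E_\e(u^\e)<+\infty$; by Lemma \ref{set-compactness} the limit $A$ has finite perimeter. At $\HH^1$-a.e.\ point $x_0\in\partial A$ one may blow up, so that on small balls the sets $V_\e(u^\e)$ look, after rescaling, like a half-space with normal $\nu(x_0)$. The energy $E_\e(u^\e)$ restricted to such a ball controls, up to the negligible error coming from the bad region $\mathcal V\setminus\mathcal V_\alpha$, the number of edges of $\mathcal V_\alpha$ crossed by the interface $\partial V_\e(u^\e)$; any such collection of crossed edges contains a path in $\E$ connecting the two ``sides'' of the ball, whose cardinality is bounded below by the first-passage quantity $m_0$ between appropriate points. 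The subadditive ergodic theorem (Kingman, applied as in \cite{BP-JFA}) then gives $m_0\big((0,0),(t,0)\big)/t\to\tau_0$ a.s., with $\tau_0$ deterministic by ergodicity of $T_x$, and isotropy of the Poisson process forces the same limit for every direction; assembling the blow-up estimates with the Fonseca--Müller / \cite{FM,BMS} localization method yields $\liminf_\e E_\e(u^\e)\ge \tau_0\HH^1(\partial A)$. The positivity $\tau_0>0$ follows because a path of $m$ edges has length bounded below by a constant times $m$ with high probability (again from \cite{Pi} and the $\mathcal V_\alpha$ construction), so $m_0$ cannot grow slower than linearly; finiteness $\tau_0<+\infty$ comes from exhibiting an explicit competitor path of linearly many edges.

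For the upper bound it suffices, by density of polyhedral sets in the perimeter topology and by the locality and measure-theoretic structure of the $\Gamma$-limsup, to construct a recovery sequence when $A$ is a half-space $\{x\cdot\nu<0\}$, and then glue. Here the subadditive structure of the problems defining $m_0$ is used directly: one selects, for large $t$, a near-optimal path in $\E$ realizing $m_0$ across a strip of width $t$, translates copies of it by the ergodic action to tile a neighbourhood of $\partial A$, and defines $u^\e$ to be $+1$ on one side of the union of these paths and $-1$ on the other. The energy of this configuration is $\e$ times the total number of edges used, which by the subadditive limit is asymptotically $\tau_0\HH^1(\partial A)$ per unit length; a diagonal argument over the approximation parameters closes the inequality. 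The main obstacle, and the step requiring the most care, is the lower bound: one must show that the ``bad'' edges (very long or very short, lying outside $\mathcal V_\alpha$) genuinely cannot be used to cheat — i.e.\ that a modification of $u^\e$ correcting for them changes the limit set by a set of vanishing measure and changes the energy by a vanishing amount — which is exactly where Pimentel's estimate \cite{Pi} and the isoperimetric argument sketched in the introduction enter, and where the interplay between the geometric (perimeter) compactness and the combinatorial (edge-counting) energy must be handled simultaneously.
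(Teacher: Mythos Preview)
Your proposal is correct and follows essentially the same architecture as the paper: compactness via Pimentel's lemma, existence and isotropy of $\tau_0$ via Kingman's subadditive theorem, a blow-up/rectangle-covering lower bound using the regular-cell cluster $\mathcal V_\alpha$, and an upper bound by near-optimal paths along the boundary of a polyhedral approximation. The one point where the paper is more specific than your sketch is the lower bound mechanism: the fact that the interface in a small rectangle must contain a path of edges joining the two short sides is not automatic and is forced by the \emph{channel property} of $\N_\alpha$ (Lemma~\ref{chalpha}), which produces many disjoint paths of regular cells in thin strips above and below the interface; these act as barriers after a small modification of $A_\e$, and this is the precise place where the ``bad-edge'' error is absorbed.
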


%\begin{proposition}[definition of the surface tension]\label{existence_surface_tension}
%Almost surely there exists the limit

%it is deterministic and $\tau_0\in (0,+\infty)$
%\end{proposition}

The proof of this result will be the content of Section \ref{s_peop_st}.

%With this definition we can state the homogenization theorem for the energies $E_\e$.

%\begin{theorem}[homogenization theorem] Let $\cal E$ be a Poisson random set with intensity $1$. Then almost surely, the energies $E_\e$ defined in \eqref{energies} $\Gamma$-converge to the energy $F(A)= \tau_0\,{\cal H}^1(\partial A)$, defined on sets of finite perimeter, with respect to the convergence in Definition {\rm\ref{conv}}.
%\end{theorem}

%The proof will be given in Section \ref{}.

\begin{remark}\rm By the scaling argument ${\cal N}\to \sqrt\lambda\cal N$, we deduce that if $\cal E$ is a Poisson random set with intensity $\lambda$ then  the $\Gamma$-limit of the corresponding $E_\e$ is $\sqrt\lambda\,\tau_0\,{\cal H}^1(\partial A)$.
\end{remark}

\section{Compactness}
This section is devoted to the proof of the Compactness Lemma \ref{set-compactness}. Even though we will use it in the planar case, we note that that result can be proved in any space dimension $d$ up to minor changes (see Remark \ref{record} below).

$\Pi$ denotes the set of finite connected unions of Voronoi cells (here connected means that the corresponding set of edges of the Delaunay triangulation is connected).
 If $P\in\Pi$ we set
$$
{\bf A}(P)=\{ z\in \ZZ^2: (z+Q)\cap P\neq\emptyset\}.
$$

\begin{lemma}[Pimentel's polyomino lemma \cite{Pi}]\label{pimentel}
Let $R>0$ and $\gamma>0$. Then there exists a deterministic constant $C$ such that for almost all $\omega$ there exists $\e_0=\e_0(\omega)>0$ such that
if $P\in\Pi$ and $\e<\e_0$ satisfy
\begin{equation}
P\cap {R\over\e}Q\neq\emptyset,\qquad
\max\Bigl\{\# \{i: C_i\subset P\},\# {\bf A}(P)\Bigr\}\ge \e^{-\gamma}
\end{equation}
then we have
 \begin{equation}\label{eqpap}
{1\over C} \# \{i: C_i\subset P\}\le \# {\bf A}(P)\le C\,\# \{i: C_i\subset P\}.
\end{equation}
%
%{\rm (b)} if $P\in\Pi$ and $\e<\e_0$ satisfy
%\begin{equation}
%P\cap {R\over\e}Q\neq\emptyset,\qquad
%\# {\bf A}(P)\ge \e^{-\gamma}
%\end{equation}
%then we have
% \begin{equation}
%{1\over C} \# \{i: C_i\subset P\}\le \# {\bf A}(P).
%\end{equation}
\end{lemma}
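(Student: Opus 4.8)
The statement is essentially a transcription of the polyomino result of Pimentel (and its antecedents in first-passage percolation on Poisson–Delaunay graphs). The plan is to reduce the two inequalities in \eqref{eqpap} to two quantitative geometric facts about Voronoi/Delaunay structures generated by a unit-intensity Poisson point process, each of which holds for all but finitely many scales $\e$ with probability one once $R$ and $\gamma$ are fixed.

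First I would establish the easy inclusion, namely $\#\{i:C_i\subset P\}\le C\,\#{\bf A}(P)$. The point is that each Voronoi cell contained in $P$ has positive area, and a large collection of disjoint cells cannot all hide inside a small union of unit squares unless many cells are very small. More precisely, pick a constant $c_0>0$ and call a cell $C_i$ \emph{small} if $|C_i|<c_0$; a standard large-deviation estimate for the Poisson process (for instance, bounding the probability that a fixed unit square contains an anomalously large number of Voronoi vertices, equivalently Delaunay triangles) shows that for a suitable $c_0=c_0(\gamma)$ the number of small cells meeting ${\bf A}(P)$ is, with probability one and for $\e$ small, at most (say) $\e^{-\gamma/2}\cdot\#{\bf A}(P)$, uniformly over all admissible $P$ inside $\frac{R}{\e}Q$ — here I would use a union bound over the $O(\e^{-2})$ lattice squares in $\frac{R}{\e}Q$ together with exponential tail bounds, so that the Borel–Cantelli argument closes along the sequence $\e=2^{-n}$ and then monotonicity handles intermediate $\e$. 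The non-small cells contribute at most $|P|/c_0\le \#{\bf A}(P)/c_0$, and combining the two bounds with the hypothesis $\#\{i:C_i\subset P\}\ge\e^{-\gamma}$ (which forces us into the regime where the small-cell count is genuinely negligible) yields the claimed inequality with $C=C(c_0)$.

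For the reverse inequality $\#{\bf A}(P)\le C\,\#\{i:C_i\subset P\}$ — the genuinely substantive direction, and the main obstacle — the issue is that a connected polyomino ${\bf A}(P)$ of $N$ unit squares could in principle be "covered" by very few Voronoi cells only if some cells were enormous, which again is rare but must be controlled uniformly. I would argue by a renormalization/blocking scheme: partition $\frac{R}{\e}Q$ into blocks of side $L$ (a large but fixed integer), declare a block \emph{good} if every Voronoi cell meeting it has diameter at most $L/4$ and every $L/4$-neighbourhood of a point in the block contains at least one full Voronoi cell — both are decreasing events in the number of points only mildly, and by the exponential decay of the diameter of the Voronoi cell containing a given point (a classical estimate for Poisson–Voronoi tessellations) each block is good with probability $\ge 1-e^{-cL}$, independently of blocks at distance $\ge 2$. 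Choosing $L$ large makes the bad blocks subcritical, so with probability one and for $\e$ small every connected polyomino of $\ge\e^{-\gamma}$ squares inside $\frac{R}{\e}Q$ contains a positive fraction of good blocks; inside each good block the defining property of $P\in\Pi$ (that $P$ is a \emph{connected union of whole Voronoi cells} whose union meets that block) forces at least one whole cell $C_i\subset P$ per good block, and no cell is counted more than $O(L^2)$ times because cells in good blocks have bounded diameter. This produces $\#\{i:C_i\subset P\}\ge c\,L^{-2}\,\#{\bf A}(P)$, i.e.\ the second inequality.

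The technical core I expect to fight with is making the "uniformly over all admissible $P$" quantifier compatible with Borel–Cantelli: one must phrase both geometric events purely in terms of the local configuration of $\N$ in $\frac{R}{\e}Q$ (diameters of cells, counts of vertices per square), so that the bad event has probability summable in $n$ along $\e=2^{-n}$, and then upgrade from the dyadic sequence to all $\e<\e_0(\omega)$ by monotonicity of ${\bf A}(\cdot)$ and of the cell-count under refinement of the scale. Once the two deterministic-after-conditioning inequalities above are in hand, the lemma follows by taking $C$ to be the larger of the two constants and $\e_0=\e_0(\omega)$ the threshold beyond which both renormalized events hold; the role of the hypothesis $\max\{\#\{i:C_i\subset P\},\#{\bf A}(P)\}\ge\e^{-\gamma}$ is exactly to put us past the regime where the $O(\e^{-2})$ union bound and the subcritical bad-block count become effective.
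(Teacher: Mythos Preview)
The paper does not supply its own proof of this lemma: it is quoted from Pimentel's paper \cite{Pi} and used as a black box (the paper only records the consequence that $\min\{\#\{i:C_i\subset P\},\#{\bf A}(P)\}\ge C^{-1}\e^{-\gamma}$ and then applies the lemma in the compactness argument and in Remark~\ref{conv}). So there is no ``paper's proof'' to compare your proposal against.

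As for the proposal itself, the overall architecture is sound and is in the spirit of Pimentel's original argument: control of the density of abnormally small cells gives the bound $\#\{i:C_i\subset P\}\le C\,\#{\bf A}(P)$, and a block/renormalization scheme in which good blocks have all intersecting cells of bounded diameter gives the reverse inequality. Two points deserve more care if you want a self-contained proof. First, in the easy direction your bound on small cells should not involve the factor $\e^{-\gamma/2}$ multiplying $\#{\bf A}(P)$, since that would make the constant in \eqref{eqpap} $\e$-dependent; what you actually need is that for $c_0$ fixed the number of cells of area $<c_0$ in any unit square of $\frac{R}{\e}Q$ is at most a deterministic constant for all small $\e$, which follows from the exponential tail of the number of Poisson points (hence Delaunay triangles) per unit square and a union bound over $O(\e^{-2})$ squares. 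Second, in the hard direction your claim that ``every connected polyomino of $\ge\e^{-\gamma}$ squares contains a positive fraction of good blocks'' is exactly the Peierls/contour estimate that does the real work in \cite{Pi}; it is correct, but it is not an immediate consequence of subcriticality of bad blocks and requires the combinatorial count of lattice animals (the same $C_2^n$ bound used elsewhere in the paper). With those two refinements your sketch would reproduce Pimentel's argument.
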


Note in particular that in the hypotheses of the lemma, we also have
\begin{equation}
\min\Bigl\{\# \{i: C_i\subset P\},\# {\bf A}(P)\Bigr\}\ge {1\over C}\e^{-\gamma}\,.
\end{equation}
Further geometric properties of such Voronoi tessellations can be found in \cite{Calka}.

Lemma \ref{set-compactness} will be a consequence of the following result.

\begin{lemma}[compactness of Voronoi sets]\label{Voronoi-comp}
Let $u^\e$ be such that $\sup_\e E_\e(u^\e)<+\infty$. Then we can write
$$
V_\e(u^\e)= (A_\e\cup B'_\e)\setminus B''_\e,
$$
where $|B'_\e|+|B''_\e|\to 0$,  the family ${\bf 1}_{A_\e}$ is precompact
in $L^1_{\rm loc}(\rr^2)$ and each its limit point is the characteristic
function of a set of finite perimeter $A$, so that the same  holds for ${\bf 1}_{V_\e(u^\e)}$.
\end{lemma}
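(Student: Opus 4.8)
The plan is to prove Lemma~\ref{Voronoi-comp} by separating the Voronoi cells of $V_\e(u^\e)$ into a ``good'' part with controlled geometry and a ``bad'' part of vanishing measure, using Pimentel's polyomino lemma to control the bad part. First I would fix a realization $\omega$ in the full-measure set where Lemma~\ref{pimentel} holds, fix $R>0$, and work inside the ball ${R\over\e}Q$ (the final statement being about $L^1_{\rm loc}$ convergence, it suffices to argue cube by cube and diagonalize). Write $m=\sup_\e E_\e(u^\e)<+\infty$, so the scaled interface $\partial V_\e(u^\e)$ consists of at most $m/\e$ edges of $\e\V$. The set $\partial V_\e(u^\e)$ is a union of closed polygonal curves; each such curve bounds a cluster $P$ that is (after rescaling by $1/\e$) a finite connected union of Voronoi cells, i.e.\ an element of $\Pi$, and either $C_i\subset P$ for all cells of that cluster with $u_i=1$ or with $u_i=-1$ (choosing the bounded side). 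The idea is that clusters with few cells --- fewer than $\e^{-\gamma}$ for a suitable small $\gamma>0$ --- contribute negligibly, while clusters with many cells are controlled by \eqref{eqpap}.

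The key steps, in order. (1) \emph{Small clusters.} By an isoperimetric/combinatorial estimate, a connected union $P$ of $N$ Voronoi cells whose boundary $\partial P$ has $\el$ edges satisfies a bound of the form $\#{\bf A}(P)\le C N$ is what we want but do not yet have for small $N$; instead, directly, a cluster with boundary consisting of $\el$ edges of $\V$ is contained in the union of the (at most $2\el$) Voronoi cells adjacent to those edges together with whatever it encloses --- here we must instead observe that a \emph{connected} polygonal Jordan curve made of $\el$ edges of $\V$ encloses an area at most comparable to $\el^2$ times the square of the longest enclosed edge, which is not uniformly bounded. So the correct route for small clusters: discard \emph{all} cells belonging to clusters $P$ with $\#\{i:C_i\subset P\}<\e^{-\gamma}$; since the total number of boundary edges over all clusters is $\le m/\e$, and each such small cluster has at least one boundary edge, there are at most $m/\e$ of them, hence at most $(m/\e)\e^{-\gamma}=m\e^{-1-\gamma}$ discarded cells; but we must bound their total \emph{area}, for which we use that inside ${R\over\e}Q$ the typical Voronoi cell has area $O(1)$ and, crucially, that a single polygonal Jordan curve with $\el$ edges in $\V$ encloses area $\le C\el^2$ by the isoperimetric inequality applied to its length (the length being bounded by $\el$ times the maximal enclosed edge, which for an enclosing curve of bounded length is itself bounded) --- this circular issue is exactly what Pimentel's lemma circumvents and I expect to need it here too. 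So more carefully: (1') for clusters $P$ with $\#{\bf A}(P)<\e^{-\gamma}$ we estimate their area by $\e^2\#{\bf A}(P/\e)<\e^{2-\gamma}$ times their number $\le m\e^{-1}$, giving total area $\le m\e^{1-\gamma}\to 0$; for clusters with $\#{\bf A}(P)\ge\e^{-\gamma}$ we are in the regime of Lemma~\ref{pimentel}. (2) \emph{Large clusters.} For a cluster $P/\e\in\Pi$ (unscaled) with $\#{\bf A}(P/\e)\ge\e^{-\gamma}$, Lemma~\ref{pimentel} gives $\#{\bf A}(P/\e)\le C\#\{i:C_i\subset P/\e\}$, so the number of unit squares meeting the scaled cluster $P$ is $\le C$ times the number of its cells. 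Summing the boundary edge counts over all large clusters is $\le m/\e$; and the discrete perimeter of ${\bf A}(P/\e)$ (number of lattice squares on its boundary) is comparable to the number of $\V$-edges on $\partial(P/\e)$ up to the ``long edge'' cells, which again have been shunted into the negligible family. (3) \emph{Conclusion.} Define $A_\e$ to be $V_\e(u^\e)$ with the small clusters both added (where they are ``$-1$'' holes) and removed (where they are ``$+1$'' islands) appropriately, i.e.\ set $A_\e$ to be the union of the large ``$+1$'' clusters with the small ``$-1$'' clusters filled in, so that $\partial A_\e$ consists only of boundaries of large clusters; then by (2) the number of lattice squares meeting $\partial A_\e$ is $O(1/\e)$, hence $\mathcal H^1(\partial^*{\bf A}(A_\e/\e))\le C/\e$, hence $\mathcal H^1(\partial^*A_\e)$ is bounded uniformly (after rescaling the polyomino's perimeter by $\e$); and $V_\e(u^\e)=(A_\e\cup B'_\e)\setminus B''_\e$ where $B'_\e,B''_\e$ are unions of small clusters with $|B'_\e|+|B''_\e|\le m\e^{1-\gamma}\to 0$. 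Then ${\bf 1}_{A_\e}$ is precompact in $L^1_{\rm loc}$ by the standard compactness theorem for sets of equibounded perimeter, every limit point ${\bf 1}_A$ has $A$ of finite perimeter, and consequently ${\bf 1}_{V_\e(u^\e)}\to{\bf 1}_A$ in $L^1_{\rm loc}$ as well.

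I expect the main obstacle to be step (1)/(1'): making rigorous the passage from ``a cluster with few cells'' to ``a cluster with small area'', because without an a priori bound on edge lengths the area enclosed by a short polygonal curve need not be small, and conversely a cluster with small area might still have many tiny cells. The resolution is to phrase everything through the \emph{counting} quantities $\#\{i:C_i\subset P\}$ and $\#{\bf A}(P)$ that appear in Pimentel's lemma rather than through area directly: one shows (a) $|P|\le C\,\#{\bf A}(P)$ always (each unit square contributes area $1$) and then splits by whether $\#{\bf A}(P)$ is above or below $\e^{-\gamma}$; below the threshold, area is controlled by $\e^{2-\gamma}$ per cluster and by the edge budget there are $O(1/\e)$ clusters; above the threshold, Pimentel converts cell counts to square counts and the edge budget $m/\e$ controls the total perimeter. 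A secondary technical point is the bookkeeping that ensures $A_\e$ is genuinely a set of finite perimeter with the perimeter bound uniform in $\e$ --- this follows from comparing $\mathcal H^1(\partial^*A_\e)$ with $\e$ times the number of $\V$-edges on the boundary of the large clusters, which is at most $m$, plus an error from the long edges of large-cluster boundary cells; but those long-edge cells, being few in number per unit of boundary length by the same Pimentel estimate, can again be absorbed into $B'_\e\cup B''_\e$ at the cost of a further vanishing measure. Assembling these pieces and then diagonalizing over an exhausting sequence of cubes $R\to\infty$ completes the argument.
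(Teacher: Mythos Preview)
Your overall strategy coincides with the paper's: split the connected components of $\partial V_\e(u^\e)$ by a threshold $\e^{-\gamma}$, absorb the small ones into sets $B'_\e,B''_\e$ of vanishing measure, and use Pimentel's lemma on the large ones. The small-cluster estimate you give (area $\le \e^2\#{\bf A}(P)<\e^{2-\gamma}$ per component, at most $m/\e$ components) is fine and in fact slightly sharper than the paper's isoperimetric version.

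There is, however, a genuine gap at the end. After removing the small components you write ``hence $\mathcal H^1(\partial^*A_\e)$ is bounded uniformly'': this is false. The boundary $\partial A_\e$ is still made of Voronoi edges, which can be arbitrarily long, so there is no uniform perimeter bound on $A_\e$ itself. The sentence immediately before gives the correct bound, namely on the perimeter of the \emph{polyomino} $\e\,{\bf A}(A_\e/\e)$, but this does not transfer to $A_\e$. Your proposed workaround (``absorb the long-edge cells into $B'_\e\cup B''_\e$'') is circular: removing cells creates new boundary edges, possibly long ones, and there is no clear termination.

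The paper's resolution is a further decomposition, not a further absorption: write $A_\e=A'_\e\cup A''_\e$ where $A'_\e$ is the union of $\e$-cubes contained in $A_\e$ (a genuine polyomino at scale $\e$) and $A''_\e=A_\e\setminus A'_\e$. Then ${\bf 1}_{A'_\e}$ has bounded perimeter and one shows $|A''_\e|\to 0$. Both steps use Pimentel, but applied to the \emph{boundary layer}
\[
P(S)=\bigcup\{C_i: u_{\e i}=1,\ \e C_i\cap S\neq\emptyset\}
\]
(the cells adjacent to the curve $S$), not to the region enclosed by $S$ as you do. The point is that $\#\{i:C_i\subset P(S)\}$ is bounded by the number of edges of $S$, hence summing over $S$ is bounded by $m/\e$; Pimentel then gives $\sum_S\#{\bf A}(P(S))\le Cm/\e$, which simultaneously yields $\mathcal H^1(\partial A'_\e)\le Cm$ and $|A''_\e|\le C\e m$. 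Your enclosed-region $P$ has cell count not controlled by the energy, so your application of Pimentel in step~(2) does not deliver the perimeter bound you need.

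In short: same architecture, but you are missing the polyomino step $A_\e=A'_\e\cup A''_\e$ and you are applying Pimentel to the wrong set in the large-component regime.
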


\begin{proof} Since we reason locally, in order to ease the notation we
assume that e.g.~all $u^\e$ are identically $-1$ outside a fixed cube
(or equivalently that $V_\e(u^\e)$ are contained in a fixed cube).

We fix $\gamma>0$ small enough. We subdivide $\partial V_\e(u^\e)$ into its connected components.
We denote by ${\mathcal C}^{\gamma,+}_\e$ the family of such connected components $S$ with
\begin{equation}\label{condc+}
\#\{ i\in \N\,: u_{\e i}=1, \e C_{i}\cap S\neq\emptyset \}\geq \e^{-\gamma}.
\end{equation}
Note that each such connected component can be identified with the set
\begin{equation}\label{def-P}
P=P(S)=\bigcup \Bigl\{ C_{i}: u_{\e i}=1, \e C_{i}\cap S\neq\emptyset\Bigr\},
\end{equation}
which belongs to the set $\Pi$. We denote by ${\mathcal C}^{\gamma,-}_\e$ the family of the remaining connected components.

The first step will be to identify the small sets $B'_\e$ and $B''_\e$ as the `interior' of contours in
${\mathcal C}^{\gamma,-}_\e$ where the inner trace of ${\bf 1}_{V_\e(u^\e)}$ is $0$ and $1$, respectively.
In this way the remaining set will have a boundary only composed of `large' components from ${\mathcal C}^{\gamma,+}_\e$. This argument needs a little more formalization since we may have contours contained in other contours.

By the finiteness of the energy we have
$$
\#{\mathcal C}^{\gamma,-}_\e\le {C\over\e}
$$
%{\color{red} The perimeter of each cell $C_i$ is estimated by $|\log\e|$ \cite{Calka}; hence,}
%by an isoperimetric estimate, the measure of the bounded set sorrounded by each $S\in {\mathcal C}^{\gamma,-}_\e$ is not larger than $\e^{2-2\gamma}|\log\e|^2$. Hence, the total measure of such sets is not greater than $C\e^{1-2\gamma}|\log\e|^2$.
Note that
$$
\#\Bigl({\bf A}\Bigl({1\over\e}S\Bigr)\Bigr)\le C\e^{-\gamma}\hbox{ for every }S\in {\mathcal C}^{\gamma,-}_\e.
$$
 Indeed, otherwise $\#({\bf A}({1\over\e}S))> C\e^{-\gamma}>\e^{-\gamma}$, so that the hypotheses of Lemma \ref{Voronoi-comp} are satisfied and \eqref{eqpap} implies that \eqref{condc+} holds, which gives a contradiction. Hence each $S\in {\mathcal C}^{\gamma,-}_\e$ is contained in a set with boundary at most of length $C\e^{1-\gamma}$.
By an isoperimetric estimate, the measure of the bounded set sorrounded by each $S\in {\mathcal C}^{\gamma,-}_\e$ is  $O(\e^{2-2\gamma})$. Hence, the total measure of such sets is $O(\e^{1-2\gamma})$.

Consider now each maximal $S\in {\mathcal C}^{\gamma,-}_\e$; i.e., which is not contained in any other bounded set whose boundary is another element in ${\mathcal C}^{\gamma,-}_\e$. For each such $S$, let $P$ be defined from $S$ by \eqref{def-P}. We have two cases, whether $\e P$ is interior to $S$ or not. We denote by ${\mathcal C}^{\gamma,-}_{1,\e}$ the first family, by ${\mathcal C}^{\gamma,-}_{2,\e}$ the second one, and define
$B'_\e$ as the union of the $\e C_{i/\e}$ in the interior of $S$ for some $S\in {\mathcal C}^{\gamma,-}_{1,\e}$ and such that $u^\e_i=1$, and $B''_\e$ as the union of the $\e C_{i/\e}$ in the interior of $S$ for some $S\in {\mathcal C}^{\gamma,-}_{2,\e}$ and such that $u^\e_i=-1$. If we set
$$
A_\e=(V_\e(u^\e)\setminus B'_\e)\cup B''_\e
$$
then $\partial A_\e$ consists only of components in ${\mathcal C}^{\gamma,+}_\e$,
and
$$
|B'_\e\cup B''_\e|\le C\e^{1-2\gamma}.
$$

In order to prove the compactness of $A_\e$ we  write
$A_\e= A'_\e\cup A''_\e$, where
\begin{eqnarray*}
	A'_\e&=&\bigcup\{(\e z+\e Q): \e z+\e Q)\subset \partial A_\e\}\\
	A''_\e&=&A_\e\setminus A'_\e
\end{eqnarray*}
Note that
$$
\partial A'_\e\subset \e\bigcup_{S\in {\mathcal C}^{\gamma,+}_\e}\partial{\bf A}(P(S))
$$
with $P(S)$ defined in \eqref{def-P}.
By Lemma \ref{pimentel} we have
$$
{\mathcal H}^1(\partial {\bf A}(P(S)))\le C\#\{ i\in \N\,: u_{\e i}=1, \e C_{i}\cap S\neq\emptyset \}
$$
Summing up over all $S\in  {\mathcal C}^{\gamma,+}_\e$ we obtain
$$
{\mathcal H}^1(\partial A'_\e)\le C\,E_\e(u^\e).
$$
Hence, the functions ${\bf 1}_{A'_\e}$ are locally precompact in $L^1(\rr^2)$ by the precompactness of sets of equibounded perimeter \cite{LN98,Maggi}.
%
%In order to prove the compactness of $A_\e$ we  write
%$A_\e= A'_\e\cup A''_\e$, where
%\begin{eqnarray*}
%A'_\e&=&\bigcup\{(\e z+\e Q)\cap A_\e: (\e z+\e Q)\cap \partial A_\e\neq\emptyset\}\\
%A''_\e&=&A_\e\setminus A'_\e
%\end{eqnarray*}
%Note that
%$$
%A'_\e\subset \e\bigcup_{S\in {\mathcal C}^{\gamma,+}_\e}{\bf A}(P(S))
%$$
%with $P(S)$ defined in \eqref{def-P}.
%By Lemma \ref{pimentel} we have
%$$
%{\mathcal H}^1(\partial {\bf A}(P(S)))\le C\#\{ i\in \N\,: u_{\e i}=1, \e C_{i}\cap S\neq\emptyset \}
%$$
%Summing up over all $S\in  {\mathcal C}^{\gamma,+}_\e$ we obtain
%$$
%{\mathcal H}^1(\partial A'_\e)\le C\,E_\e(u^\e).
%$$
%Hence, the functions ${\bf 1}_{A'_\e}$ are locally precompact in $L^1(\rr^2)$ by the precompactness of sets of equibounded perimeter \cite{LN98,Maggi}.

Again by Lemma \ref{pimentel} we have
$$
|A''_\e|\le C\e^2\sum_{S\in {\mathcal C}^{\gamma,+}_\e}\#{\bf A}(P(S))\le C\e E_\e(u^\e).
$$
This shows that $|A''_\e|\to 0$, and proves the claim.
\end{proof}

\begin{remark}\label{record}\rm
The previous compactness result holds in any dimension $d$ with minor changes in the proof, upon noting that Pimentel's lemma holds with
$$
{\bf A}(P)=\{ z\in \ZZ^d: (z+Q)\cap P\neq\emptyset\}
$$
and $Q$ the coordinate unit cube in $\rr^d$  \cite{Pi}.
\end{remark}

\begin{remark}[convergence in terms of the empirical measures]\label{conv}\rm
To each $u^\e:\e\N\to\{-1,1\}$ we can associate the so-called {\em empirical measure}
$$
\mu(u^\e)=\sum_{\{i\in\e \N : u^\e_i=1\}}\e^2 \delta_i.
$$
If $u^\e$ are such that $\sup_\e E_\e(u^\e)<+\infty$ and $u^\e$ converge to $A$ as in Definition \ref{conv-L1},
then the measures $\mu(u^\e)$ locally converge to the measure ${\bf 1}_A{\mathcal L}^2$ with respect to the weak$^*$ convergence of measures. Thanks to Lemma \ref{Voronoi-comp}, then these two convergences are equivalent.

To check the convergence of $\mu(u^\e)$, we first note that we may suppose that
$
\mu(u^\e)\wto f{\mathcal L}^2
$
for some $f:\rr^2\to[0,1]$. It suffices to show that $f=0$ at almost every point of density $0$ for $A$ (a symmetric argument then shows that $f=1$ at almost every point of density $1$ for $A$).

For almost all such $x_0$ we have that $$\limsup\limits_{\e\to 0}|V_\e(u^\e)\cap (x_0+\rho Q)|= o(\rho^2)$$ and
$\limsup\limits_{\e\to 0}E_\e(u^\e,Q_\rho)= o(\rho)$, where we have set
\begin{eqnarray}\nonumber
E_\e(u^\e,Q_\rho)={1\over 2}\e\, \#\{(i,j)\in \e\E: u^\e_i\neq u^\e_j, i\hbox{ or }j\in \rho Q\}.
\end{eqnarray}

We may subdivide  $V_\e(u^\e)\cap (x_0+\rho Q)$ into disjoint connected components:
$$
V_\e(u^\e)\cap (x_0+\rho Q)= \bigcup_{\# (P_j\cap \e\N)\le \e^{-\gamma}}P_j
\cup \bigcup_{\# (L_k\cap\e\N)> \e^{-\gamma}}L_k,
$$

We may apply Lemma \ref{pimentel} to each $L_k$ to obtain
$$
\sum_k\e^2\#(L_k\cap \e\N)\le C\e^2\sum_k\#{\bf A}\Bigl({1\over \e}L_k\Bigr)\le C|V_\e(u^\e)|= o(\rho^2).
$$
As for $P_j$ we have
$$
%\sum_j\#(\hbox{edges of }\partial P_j)
\#(\{P_j\})\le  {1\over\e}E_\e(u^\e,Q_\rho)= {1\over\e} o(\rho),\qquad
\sum_j\#(P_j\cap \e\N)\le {1\over \e^{1+\gamma}} o(\rho).
$$

In conclusion,
$$
\mu(u^\e)(x_0+\rho Q)=
\e^2\#\{u^\e_i=1, \ i\in x_0+\rho Q\}\le o(\rho^2)+\e^{1-\gamma}o(\rho).
$$
Letting first $\e\to0$ and then $\rho\to0$ we prove the claim.
\end{remark}

\section{Proof of the Homogenization Theorem}
\label{s_peop_st}

In this section we prove Theorem \ref{HoT}, first characterizing the surface tension and then computing the $\Gamma$-limit.
Preliminarily, we introduce  regular Voronoi cells and study their geometry.

\subsection{Geometry of clusters of regular Voronoi cells}
The surface tension characterizing the $\Gamma$-limit will be expressed by an asymptotic average length of minimal paths analogous to first-passage percolation formulas. A difficulty in our case is that in principle one of the end-points of such paths could be located in an `exceptional region' where very small Voronoi cells accumulate. In order to treat this case, we first introduce regular Voronoi cells and study some percolation characteristics of the grid of such cells.

For $\alpha>0$ we set
\begin{equation}
\N^0_\alpha=\Bigl\{ i\in \N: C_i\hbox{ contains a ball of radius }\alpha\hbox{, diam}\,C_i\le {1\over\alpha}, \#\hbox{edges of }C_i\le {1\over\alpha}\Bigr\}
\end{equation}
the family of {\em regular Voronoi cells with parameter $\alpha$}.
The following lemma describes some geometrical features of regular Voronoi tessellations.

\begin{lemma}[a channel property of $\N^0_\alpha$]\label{chalpha} %Let $\alpha<\alpha_0$ be as in Proposition \ref{conn-alfa}.
Let $\delta>0$. For every $T\in\rr$, $\nu\in S^1$ and $x\in\rr^2$ we define
$$
R^\nu_{T,\delta}(x)=\Bigl\{x: |\langle x-x_i,\nu_i\rangle|\le \delta T, |\langle x-x_i,\nu_i^\perp\rangle|\le {1\over 2}T\Bigr\}.
$$
Then there exist $\alpha_0, C_\delta>0$ such that a.s.~there exists $T_0(\omega)>0$ such that for all $T>T_0(\omega)$
the rectangle $R^\nu_{T,\delta}(x)$ contains at least $C_\delta T$ disjoint paths of Voronoi cells
$C_i$ with $i\in\N^0_\alpha$ connecting the two opposite sides of $R^\nu_{T,\delta}(x)$ parallel to $\nu$.
%such that
%(a) the number of the edges of each Voronoi cell is not greater than $N_0$;
%(b) the measure of each Voronoi cell is at least $\alpha_0$.
This property is uniform as $x/T$ vary on a bounded set of $\rr^2$.
\end{lemma}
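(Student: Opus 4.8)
The plan is to derive the channel property from a standard percolation argument applied to a coarse-grained lattice, following the strategy of \cite{BP-JOSS,BP-JFA,BP-ARMA}. First I would fix a large scale $L>0$ and tile $\rr^2$ by the squares $Lz+LQ$, $z\in\ZZ^2$. Call a square \emph{good} if every Voronoi cell $C_i$ with $i\in\N$ that meets $Lz+LQ$ is a regular cell with parameter $\alpha$ (i.e.\ $i\in\N^0_\alpha$), and in addition the number of points of $\N$ in a fixed neighbourhood $Lz+3LQ$ lies between two positive constants times $L^2$. By the defining properties of the Poisson point process (the Poisson law for point counts and the independence on disjoint sets), for each fixed $L$ the probability that a given square is good is some $p(L,\alpha)$; the key point is that by choosing $\alpha=\alpha_0$ small enough and then $L$ large enough, $p(L,\alpha_0)$ can be made as close to $1$ as we wish, because the events ``$C_i$ is irregular'' (contains no ball of radius $\alpha$, or has diameter $>1/\alpha$, or has more than $1/\alpha$ edges) force an atypical local configuration of $\N$ whose probability tends to $0$ as $\alpha\to0$, uniformly in the location; see \cite{Calka} for the relevant estimates on Voronoi cell geometry. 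The goodness of a square depends only on $\N$ restricted to a bounded neighbourhood, so the field $(\mathbf 1\{Lz+LQ\text{ good}\})_{z\in\ZZ^2}$ is a finite-range-dependent Bernoulli-type field, hence stochastically dominates a supercritical i.i.d.\ site percolation when $p(L,\alpha_0)$ is close enough to $1$ (Liggett--Schonmann--Stacey domination).

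Next I would transfer crossings in the coarse lattice to paths of regular Voronoi cells. Standard results on crossings in supercritical percolation (the multi-crossing / disjoint-crossings estimates, as in Grimmett's book or as used in \cite{BP-JOSS}) give: for a rectangle of the coarse lattice of dimensions roughly $(\delta T/L)\times(T/L)$ there exist at least $C'_\delta\, T/L$ disjoint left-right crossings by good squares, with probability tending to $1$ as $T\to\infty$; by Borel--Cantelli along $T\in\NN$, and a routine interpolation for non-integer $T$, this holds a.s.\ for all $T>T_0(\omega)$. Each such crossing by good squares, being a connected string of unit squares, yields a connected chain of Voronoi cells $C_i$ all of which are regular (since every cell meeting a good square is regular), and this chain necessarily crosses the slightly smaller continuum rectangle $R^\nu_{T,\delta}(x)$ from one $\nu$-parallel side to the other, because the diameter bound $\mathrm{diam}\,C_i\le 1/\alpha_0$ prevents a single cell from ``jumping over'' the rectangle and keeps the chain within an $O(1/\alpha_0)$ neighbourhood of the union of the crossing squares. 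Disjointness of the coarse crossings, together with the cell-diameter bound, gives disjointness (after possibly discarding a bounded-density fraction of them) of the corresponding cell-paths, so after adjusting the constant we get at least $C_\delta T$ disjoint paths of regular cells connecting the two sides of $R^\nu_{T,\delta}(x)$ parallel to $\nu$.

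Finally, the uniformity as $x/T$ ranges over a bounded set follows from the stationarity of the Poisson random set under the translation group $T_x$ together with a union bound: it suffices to control $O((\mathrm{diam\ of\ the\ bounded\ set})\,T/L)^2$ translated rectangles simultaneously, and since the failure probability for each decays faster than any polynomial in $T$ (supercritical crossing probabilities decay stretched-exponentially), the union over this polynomially-large family still goes to $0$, and Borel--Cantelli again gives a single a.s.\ event and a single $T_0(\omega)$ valid for all admissible $x$. The main obstacle I expect is the quantitative control in the first step: showing that $p(L,\alpha)\to1$ with the right order of quantifiers (first choose $\alpha_0$, \emph{then} $L$), which requires combining the tail estimates for the three regularity defects of a Voronoi cell with the local point-count bounds, and doing so uniformly in the location of the square; the percolation-theoretic transfer in the second and third steps is then essentially bookkeeping built on well-established lattice results.
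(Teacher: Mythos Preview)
Your overall strategy---coarse-grain, reduce to supercritical Bernoulli site percolation on $\ZZ^2$, then invoke the channel (many-disjoint-crossings) property---is exactly the paper's. Two points of comparison are worth flagging.

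First, the paper defines a site $j\in\ZZ^2$ to be open via \emph{explicit conditions on the Poisson points} inside the box $Q_{5L}+10Lj$: every unit sub-box contains a point ($\mathbf{c}_1$), the total count is at most $K$ ($\mathbf{c}_2$), and all pairwise and boundary distances exceed $2\alpha$ ($\mathbf{c}_3$). Since these boxes are pairwise disjoint in measure, the resulting $\{0,1\}$-field is genuinely i.i.d., so no Liggett--Schonmann--Stacey domination is needed. The paper then checks, as a purely deterministic consequence of $(\mathbf{c}_1)$--$(\mathbf{c}_3)$, that if two neighbouring sites $j',j''$ are open then every Voronoi cell meeting the segment $[10Lj',10Lj'']$ lies in $\N^0_\alpha$. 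Your definition instead builds cell-regularity directly into the ``good'' event. That is conceptually closer to the goal, but it is not literally finite-range: on the (rare) event of a large void, a Voronoi cell centred arbitrarily far away can reach into $Lz+LQ$, so ``every cell meeting the box is regular'' is not determined by $\N$ restricted to any fixed neighbourhood. You can repair this by intersecting with a no-void condition like $(\mathbf{c}_1)$, but at that point you are essentially rebuilding the paper's event, and the i.i.d.\ route is then cheaper than LSS.

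Second, the order of quantifiers you single out at the end is indeed the delicate step, but you have it backwards. With your definition, fixing $\alpha_0$ first and sending $L\to\infty$ cannot give $p(L,\alpha_0)\to1$: the number of cells meeting $Lz+LQ$ grows like $L^2$, while the density of $\alpha_0$-irregular cells is a fixed positive number, so the probability that \emph{none} of them meets the box tends to $0$, not $1$. The correct order (and the one the paper uses) is: first take $L$ large so that the no-void condition $(\mathbf{c}_1)$ holds with high probability, then $K$ large so that $(\mathbf{c}_2)$ does, and \emph{then} take $\alpha$ small depending on $L,K$ so that $(\mathbf{c}_3)$ holds with high probability. With that correction, the rest of your outline (transfer of lattice crossings to regular-cell paths via the diameter bound, Borel--Cantelli over a polynomial family of translated rectangles for the uniformity in $x/T$) is sound and matches the paper.
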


\begin{proof}  Our arguments rely on the result known as channel property in the Bernoulli site percolation model in $\mathbb Z^2$.
Denote $Q_{5L}:=[-5L,5L]^2$,
and for $L,\,K,\,\alpha\in\mathbb R^+$ and $j\in\mathbb Z^2$ denote by $\mathcal{E}(L,K, \alpha,j)$ the event that the following
conditions are fulfilled:
\begin{itemize}
\item[$(\mathbf{c}_1)$] any square $[0,L]^2+Li$ with $i\in\mathbb Z^2\cap[-4.5,5.5]^2$ contains at least one point of $\mathcal{N}-10j$,
\item[$(\mathbf{c}_2)$] the total number of points  $\#((\mathcal{N}-10Lj)\cap Q_{5L})$ does not exceed $K$,
\item[$(\mathbf{c}_3)$] the distance between any two points of $(\mathcal{N}-10Lj)\cap Q_{5L}$ as well as the distance from any point of
$(\mathcal{N}-10Lj)\cap Q_{5L}$ to $\partial Q_{5L}$ is greater than $2\alpha$.
\end{itemize}
Letting $\xi_j$ be the characteristic function of $\mathcal{E}(L,K, \alpha,j)$ and considering the properties of the Poisson random set
we conclude that $\xi_j$, $j\in \mathbb Z^2$,
are i.i.d. random variables.
For any $\gamma>0$ one can choose sufficiently large $L$ and $K$ and sufficiently small $\alpha>0$
so that
\begin{equation}\label{prob_cal_e}
\mathbf{P}(\mathcal{E}(L,K, \alpha,j))>1-\gamma.
\end{equation}
Indeed, the probability that any cube of size $L$ in $Q_{5L}$
contains at least one point of the Poisson random set tends to $1$ as $L\to\infty$.  Then, given $L>0$, the probability that the number of points in $Q_{5L}$
does not exceed $K$ tends to $1$ as $K\to\infty$. The probability that in the cube $Q_{5L}$ the smallest distance between two points is less than $\alpha$ goes to zero as $\alpha\to0$. Finally, the probability that $\alpha$-neighbourhood of $\partial Q_{5L}$
contains at least one point also goes to zero. Combining this relations we obtain the desired property.

For any two points $j'\,,\,j''\in\mathbb Z^2$ such that $|j'-j''|=1$ denote by $I_{j',j''}$ the segment $[10Lj',10Lj'']$ in $\mathbb R^2$.
If $\xi_{j'}=\xi_{j''}=1$ then
\begin{itemize}
  \item[(\textbf{s}1)] any Voronoi cell $C_i$ that has a non-trivial intersection with $I_{j',j''}$ belongs to $(Q_{5L}+10Lj')\cup(Q_{5L}+10Lj'')$,
  \item[(\textbf{s}2)]  any such a cell $C_i$ contains a ball of radius $\alpha$,
  \item[(\textbf{s}3)] the number of edges of each such $C_i$ is not greater than $K$.
\end{itemize}
In particular, due to (\textbf{s}1) and $(\mathbf{c}_2)$, the total number of the cells $C_i$ having a non-empty intersection with $I_{j',j''}$ does not exceed $2K$.

Statement (\textbf{s}1) can be justified as follows: Let $x'$ be an arbitrary point of $I_{j',j''}$. Denote by $C_i$ the Voronoi cell
that contains $x'$ and by $x_i$ the corresponding point of the Poisson random set. Due to $(\mathbf{c}_1)$ we have
$|x'-x_i|\leq \sqrt{2}L$.    Then any point $y\in\partial\big((Q_{5L}+10Lj')\cup(Q_{5L}+10Lj'')\big)$ satisfies the inequality
$|y-x_i|\geq (5-\sqrt{2})L$.  On the other hand, by $(\mathbf{c}_1)$ the distance of $y$ from $\N$ is not greater than $\sqrt{2}L$. This implies that
$y\not\in C_i$. Therefore, $C_i\subset (Q_{5L}+10Lj')\cup(Q_{5L}+10Lj'')$, and  (\textbf{s}1) follows.

In a similar way one can show that for any $C_i$ that has a nontrivial intersection with $I_{j',j''}$ and any $x_j\in \mathcal{N}$ such that $C_i$ and $C_j$ have an edge in common we have
$x_j\in(Q_{5L}+10Lj')\cup(Q_{5L}+10Lj'')$. In view of  $(\mathbf{c}_2)$ this yields (\textbf{s}3).

Statement (\textbf{s}2) is an immediate consequence of $(\mathbf{c}_3)$.

Now the desired channel property follows from the well-known channel property in the Bernoulli site percolation model.
For the reader convenience we formulate it here.  Let $\eta_j$, $j\in\mathbb Z^2$, be a collection of i.i.d. random variables
taking on the value $1$ with probability $p$ and the value $0$ with probability $(1-p)$. We say that $\{j_i\}_{i=1}^M$ is a
$1$-path if $j_i$ and $j_{i+1}$, $i=1,\,2,\ldots,M-1$,
 are neighbouring points of $\ZZ^2$ and $\eta_{j_i}=1$ for all $i$. Then there exists $p_{\rm cr}\in(0,1)$
such that for all $p>p_{\rm cr}$ the following statement holds:
for any $\delta>0$   there exists $\mathcal{K}_\delta>0$ such that for almost each $\omega\in\Omega$ there exists
$T_0=T_0(\omega)>0$ such that  any rectangle  $R^\nu_{T,\delta}(x)$
with $T\geq T_0$ and $x\in[-T,T]^2$ contains at least $\mathcal{K}_\delta>0$ disjoint $1$-paths
connecting the two opposite sides of $R^\nu_{T,\delta}(x)$ parallel to $\nu$.
We refer to \cite{Kes} for further details.

It remains to choose $\gamma$  in \eqref{prob_cal_e} in such a way that $1-\gamma>p_{\rm cr}$. Labeling
the squares $Q_{5L}+10j$ with the corresponding points $j\in \ZZ^2$ and recalling the just formulated channel property
of the Bernoulli site percolation model with $\eta_j=\xi_j$ we obtain the desired statement.
\end{proof}

From the proof of the previous lemma, in particular we obtain the following proposition.

\begin{proposition}\label{conn-alfa}
There exists $\alpha_0$ such that if $\alpha<\alpha_0$ there exists a unique infinite connected component of $\N^0_\alpha$, and its complement is composed of bounded connected sets. \end{proposition}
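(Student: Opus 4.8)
The plan is to derive both statements from the channel property of Lemma \ref{chalpha} together with a standard Borel--Cantelli / duality argument familiar from Bernoulli percolation. First I would fix $\alpha<\alpha_0$, where $\alpha_0$ is the constant provided by Lemma \ref{chalpha}. The existence of an infinite connected component of $\N^0_\alpha$ follows by applying the channel property along a sequence of growing rectangles that all contain the origin: for each $n$ the rectangle (say with $\nu=e_1$, $\delta=1$, $T=2^n$) centred at the origin contains, for $n$ large enough, at least $C_1 2^n\ge 1$ crossing path of cells in $\N^0_\alpha$. Crossings of nested rectangles of comparable size necessarily intersect, so by a compactness/diagonal argument one obtains an unbounded connected subset of $\N^0_\alpha$ passing near the origin; since the Delaunay graph is locally finite, an unbounded connected subgraph is infinite, giving an infinite connected component.

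Next I would prove uniqueness. Suppose there were two distinct infinite connected components $\mathcal{G}_1,\mathcal{G}_2$ of $\N^0_\alpha$. Using the channel property applied to a large rectangle $R$ (and, by taking both coordinate directions, to a large square $[-T,T]^2$), one produces both a horizontal crossing and a vertical crossing of that square by cells in $\N^0_\alpha$; these two crossings must meet (a planar-topology argument: a horizontal and a vertical crossing of a square by connected sets of cells intersect), so the portion of $\N^0_\alpha$ inside any large square is itself connected and contains points of any infinite component that reaches into the square. Letting $T\to\infty$ forces $\mathcal{G}_1$ and $\mathcal{G}_2$ to coincide, a contradiction. (The same statement can be phrased directly through the uniqueness of the infinite cluster in supercritical Bernoulli site percolation on $\ZZ^2$ via the coarse-graining $\xi_j$ from the proof of Lemma \ref{chalpha}, transported back to Voronoi cells.)

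Finally, for the statement about the complement: a connected component of $\rr^2\setminus\V_\alpha$ — equivalently a connected cluster of cells not in $\N^0_\alpha$, together with the bounded region they enclose — that were unbounded would, by planar duality, block all crossings of arbitrarily large rectangles in one direction by cells of $\N^0_\alpha$, contradicting the channel property (which guarantees $C_\delta T\to\infty$ disjoint crossings in every large rectangle, in every direction). Hence every such complementary component is bounded. The main obstacle I anticipate is making the planar-topology/duality steps fully rigorous in the Voronoi setting: crossings are unions of cells rather than lattice paths, and one must argue carefully that a horizontal cluster-crossing and a vertical cluster-crossing of a rectangle intersect, and dually that an unbounded non-regular cluster disconnects the rectangle for the regular ones. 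This is where one either invokes a Jordan-curve argument on the associated planar graph or, more cleanly, reduces everything to the $\ZZ^2$ site-percolation picture already set up in Lemma \ref{chalpha} and quotes the classical uniqueness and duality results there (e.g.\ from \cite{Kes}).
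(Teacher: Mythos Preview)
Your proposal is essentially correct, and the second route you mention --- reducing to supercritical Bernoulli site percolation on $\ZZ^2$ via the coarse-graining variables $\xi_j$ from the proof of Lemma~\ref{chalpha} --- is exactly what the paper does. In fact the paper gives no separate argument at all: the proposition is stated as an immediate consequence of that proof, relying on the classical facts that for $p>p_{\rm cr}$ the open sites in $\ZZ^2$ form a unique infinite cluster whose complement has only bounded components, and that neighbouring open sites $j',j''$ force all Voronoi cells meeting the segment $[10Lj',10Lj'']$ to lie in $\N^0_\alpha$.

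On your direct route from the channel property: existence and the boundedness of complementary components are fine as sketched, but the uniqueness step has a small gap. From one horizontal and one vertical crossing of a square meeting you cannot conclude that the whole portion of $\N^0_\alpha$ inside the square is connected, nor that every infinite component entering the square touches those two particular crossings. The clean fix is the one you half-suggest at the end: use the channel property on the four rectangles making up an annulus $[-T,T]^2\setminus[-T/2,T/2]^2$ to produce a \emph{circuit} of regular cells; any infinite component, being unbounded, must cross this annulus and hence meet the circuit, which forces all infinite components to coincide. With that adjustment the direct argument goes through; the paper simply bypasses it by quoting the $\ZZ^2$ percolation results.
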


With this proposition in mind, we may define {\em clusters} of regular Voronoi cells.

\begin{definition}[$\alpha$-clusters]\label{clusa}
Let $\alpha<\alpha_0$ be as in Proposition {\rm\ref{conn-alfa}}. We denote by $\N_\alpha$ the infinite connected component of $\N^0_\alpha$ defined therein. Moreover, we denote by $\N_\alpha^*$ the set of vertices of edges of $C_i$ with $i\in \N_\alpha$,
by $\V_\alpha$ the set of the edges of such $C_i$, and by $\E_\alpha$ the set of edges of the Delaunay triangulation defined by set of pairs $(i,j)$ in $\N_\alpha^2$ such that $C_i$ and $C_j$ share a common edge.
\end{definition}

\begin{remark}[a channel property of $\N_\alpha$]\rm
With the notation of Definition \ref{clusa}, note that the paths of cells $C_i$ in Lemma \ref{chalpha} can be taken with $i\in \N_\alpha$.
\end{remark}

\subsection{Geometric properties of Voronoi tessellation of Poisson set. Surface tension }
\label{ss_surf_te}
In this section we consider the geometric properties of the Poisson-Voronoi tessellation
and introduce the surface tension in terms of an asymptotic distance between two  points of the grid.
In order to apply the subadditive theorem we should show that the grid distance between two arbitrary points
has a finite expectation.
The symbol $\mathbf{E}$ stands for the expectation in $\Omega$.
\begin{proposition}
For all $t>0$ we have
\begin{equation}\label{expe_fin}
\mathbf{E}(m_0((0,0),(t,0)))<+\infty
\end{equation}
%{\color{red} $E(f)$ denotes the expectation \dots}.
Furthermore,  the limit
$$
\tau_0=\lim\limits_{t\to+\infty} \frac{m_0((0,0),(t,0))}t
$$
exists almost surely and is deterministic.
\end{proposition}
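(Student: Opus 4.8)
The plan is to prove the two assertions separately. For the finiteness of the expectation \eqref{expe_fin}, the strategy is to exhibit, with uniformly positive probability, a short path in $\E$ connecting $\pi_0((0,0))$ to $\pi_0((t,0))$, and to control the tail of the length of such a path. First I would fix a parameter $\alpha<\alpha_0$ as in Proposition \ref{conn-alfa} and observe that, by the channel property of $\N_\alpha$ (Lemma \ref{chalpha} and the subsequent remark), the infinite cluster $\N_\alpha$ percolates in every direction; in particular there are paths of regular cells joining neighbourhoods of $(0,0)$ and $(t,0)$. The issue is that $\pi_0((0,0))$ and $\pi_0((t,0))$ need not be vertices of cells in $\N_\alpha$: they might sit in an exceptional region of small accumulating cells. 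To handle this I would use a renormalization/coarse-graining argument: tile $\rr^2$ by squares of a fixed size $L$ and declare a square "good" if an event analogous to $\mathcal{E}(L,K,\alpha,j)$ holds. Goodness of a square gives a deterministic bound on the number of edges needed to cross it and to reach $\N_\alpha^*$ from any point inside it. The $\ZZ^2$-process of good squares dominates a supercritical Bernoulli site percolation, so a.s. for large $t$ the straight segment from $(0,0)$ to $(t,0)$ is covered by a connected chain of good squares of length $O(t)$, yielding $m_0((0,0),(t,0))\le C t$ deterministically for $t$ large. For the expectation at fixed $t$: the only bad event is that one of the endpoints lies in a large bad region; standard percolation tail estimates (exponential decay of the diameter of the bad component containing a given point, valid once $\gamma$ is small enough in \eqref{prob_cal_e}) give $\mathbf{P}(m_0((0,0),(t,0))>R)\le C e^{-cR}$ for $R$ large, whence the integrability.

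For the existence of the limit, the plan is to apply Kingman's subadditive ergodic theorem. I would set, for integers $0\le m\le n$, $X_{m,n}=m_0((m,0),(n,0))$. Subadditivity, $X_{m,n}\le X_{m,k}+X_{k,n}$, is immediate by concatenating a path from $\pi_0((m,0))$ to $\pi_0((k,0))$ with one from $\pi_0((k,0))$ to $\pi_0((n,0))$ (if the concatenation self-intersects, one extracts a non-self-intersecting sub-path, which only decreases the number of edges). Stationarity of the distribution of $(X_{m,n})$ under the shift $m\mapsto m+1,\ n\mapsto n+1$ follows from the fact that $\N$ is a stationary point process and that $m_0$ depends on $\N$ only through a translation-covariant construction; the required ergodicity is provided by the ergodicity of $T_x$ assumed on $(\Omega,\mathcal{F},\mathbf{P})$. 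The uniform integrability hypothesis $\mathbf{E}(X_{0,n})\le C n$ follows from the deterministic linear bound obtained above, and $\mathbf{E}(X_{0,1}^-)=0$ trivially since $m_0\ge 0$. Kingman's theorem then gives that $m_0((0,0),(n,0))/n$ converges a.s. and in $L^1$ to a constant $\tau_0=\inf_n \mathbf{E}(X_{0,n})/n\in[0,+\infty)$; the a.s. convergence along integers is upgraded to the continuous limit $t\to+\infty$ by a standard monotonicity-and-sandwiching argument, using $m_0((0,0),(t,0))\le m_0((0,0),(\lceil t\rceil,0))+m_0((\lceil t\rceil,0),(t,0))$ and the fact that the error term $m_0((\lceil t\rceil,0),(t,0))$ is $o(t)$ (again by the tail estimate and Borel--Cantelli). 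Positivity $\tau_0>0$ is not asserted in this proposition, so I will not address it here.

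The main obstacle is the construction of the deterministic linear upper bound $m_0((0,0),(t,0))\le Ct$ for large $t$, i.e. the renormalization step: one must show that good squares not only percolate but that within a good square (and across a good neighbouring pair, as in statements (\textbf{s}1)--(\textbf{s}3) in the proof of Lemma \ref{chalpha}) the number of Voronoi edges is bounded by a constant, \emph{and} that from any given point one can reach the vertex set $\N_\alpha^*$ of the infinite regular cluster using only $O(1)$ edges of $\V$. The first part is essentially already in the proof of Lemma \ref{chalpha}; the delicate point is the passage through bad squares near the endpoints, which requires combining the exponential tail bound on bad clusters with a crude but deterministic a priori bound on the number of Voronoi edges inside a region in terms of the number of Poisson points there. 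Once this quantitative control of paths near arbitrary points is in place, both the integrability \eqref{expe_fin} and the hypotheses of Kingman's theorem follow routinely.
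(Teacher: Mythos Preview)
Your plan is essentially the same as the paper's: a coarse-graining into good/bad blocks based on the events $\mathcal{E}(L,K,\alpha,j)$, a bound on the number of Voronoi edges needed to traverse the (finite) bad component at each endpoint in terms of its lattice size and its Poisson count, and then Kingman's subadditive theorem for the limit. The only tactical difference is that the paper does not pass through a tail estimate $\mathbf{P}(m_0>R)\le Ce^{-cR}$; instead it bounds $\mathbf{E}(m_0((0,0),(1,0)))$ directly by a Peierls-type double sum over the size $n$ of the bad lattice animal at the origin and the number $k$ of Poisson points it contains, using the crude path bound $m_0\le (k+8nK)^2$ and the estimate $p_{kn}\le \gamma^{n/2}\bigl(\frac{(L_0n)^k}{k!}e^{-L_0n}\bigr)^{1/2}$, then extends to general $t$ by subadditivity. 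Your exponential-tail route is correct but, as you note in your last paragraph, it is \emph{not} an immediate consequence of the exponential decay of the bad-cluster diameter alone: you must couple that with the Poisson tail for the number of points inside the bad region, exactly the two ingredients the paper combines in its explicit sum.
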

%{\color{red} statement and proof of the propositions:
%
%1)
%for all $t$ $E(m_0((0,0),(t,0)))<+\infty$
%
%2) existence of $\tau_0$ (application of the subadditive theorem)
%}
\begin{proof}
 We say that a set $S\subset\mathbb Z^2$ is $l^\infty$-connected  if for any two points $i$ and $j$ in $S$  there is a path
 $i=i_0,i_1,\ldots, i_m=j$ in $S$ such that $|i_k-i_{k-1}|_\infty=1$, $k=1,\ldots,m$.

 Consider all $l^\infty$-connected sets in $\mathbb Z^2$ of size $n$ that contain the origin.
 According to \cite[Proof of Theorem 4.20]{Grim99} for any $n\geq0$ the number of such sets is not greater than $C_2^n$ for some constant
 $C_2>0$.

 Next we choose $L$, $K$ and $\alpha>0$ in such a way that $C^2_2\gamma<\frac14$, where $\gamma$ is defined in
 \eqref{prob_cal_e}.

 We say that a site $j\in \mathbb Z^2$ is open if conditions $(\mathbf{c}_1)$-$(\mathbf{c}_3)$ in the proof of Lemma \ref{chalpha} are satisfied; otherwise $j$ is closed.
 The probability that a $l^\infty$-connected  set in $\mathbb Z^2$ consists of closed points, has size $n$ and is a maximum $l^\infty$-connected component of closed points does not exceed $\gamma^n$. We denote such a set by $S(n)$.

 Consider the sets
 $$
\mathcal{S}_0(n)= \bigcup\limits_{j\in S(n)}\big(Q_{5L}+10Lj\big),
$$
$$
\mathcal{S}_1(n)=\mathcal{S}_0(n)\ \bigcup \big\{x\in\mathbb R^2\,:\,\mathrm{dist}_\infty(x,10LS(n))\leq 10L\big\}.
$$
If  $\mathcal{S}_0(n)$ contains $k$ points of $\mathcal{N}$, then the length of the shortest path from $(0,0)$ to $(1,0)$
does not exceed $(k+8nK)^2$.  The probability that $\mathcal{S}_0(n)$ contains exactly $k$ points of $\mathcal{N}$ is equal to
$$\frac{(100L^2n)^k}{k!}\exp(-100L^2n).$$ Denote $L_0=100L^2$.

The probability that $S(n)$ is a maximum connected component of closed sites and that $\mathcal{S}_0(n)$
contains exactly $k$ points of $\mathcal{N}$ is not greater than
$$
p_{kn}=\big(\gamma^n\big)^\frac12 \Big(\frac{(L_0n)^k}{k!}\exp(-L_0n)\Big)^{\frac12}.
$$
Summing up over all connected sets in $\mathbb Z^2$ that contain the origin and over all $k$ from $0$ to $+\infty$,
we obtain that the expectation of the shortest path from $(0,0)$ to $(1,0)$ admits the following upper bound:
\begin{eqnarray*}
&&\mathbf{E}  (m_0((0,0),(1,0)))\leq \sum_{n=0}^\infty\sum_{k=0}^{\infty}C_2^np_{kn}(k+8nK)^2
\\
&\leq&\sum_{n,k=0}^\infty\exp\big( (\log(C_2)+\frac12\log(\gamma))n\big)\Big(\frac{(L_0n)^k}{k!}\exp(-L_0n)\Big)^{\frac12}(k+8nK)^2.
\end{eqnarray*}
Since $\frac{(L_0n)^k}{k!}\exp(-L_0n)<1$, using the Stirling formula and considering our choice of $\gamma$,
one concludes that the series converges. This yields the relation in \eqref{expe_fin} for $t\leq 1$. For larger $t$
we use the subadditive property of ${m_0((0,0),(t,0))}$. Namely, for any $s_1,\,t_1$ and $s_2,\,t_2$ we have
$$
m_0((0,0),(s_2,t_2))\leq m_0((0,0),(s_1,t_1))+m_0((s_1,t_1),(s_2,t_2)).
$$
This ensures the relation $\mathbf{E}(m_0((0,0),(t,0)))<+\infty$ for any $t>0$.

The second statement now follows from the Kingman subadditive ergodic theorem, see \cite{Kingman} or \cite{Krengel} for details.
 \end{proof}

\begin{proposition}[isotropy  and uniformity of the surface tension]\label{unif}
We have
\begin{equation}\label{uniform_b}
\tau_0 =\lim_{t\to+\infty} {m_0(x,x+tv)\over t}
\end{equation}
for all $v\in S^1$, and the limit is uniform for $x=x(t)$ if $|x|\le Ct$ and $v\in S^1$.
\end{proposition}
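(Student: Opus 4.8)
The argument splits into two essentially independent issues: that the limit is the \emph{same} constant $\tau_0$ in every direction (isotropy), and that the convergence is uniform in the moving base point $x=x(t)$. For isotropy I would use that the law of the Poisson random set is invariant under every rigid motion of $\rr^2$: hence, for a fixed $v\in S^1$ and any $x\in\rr^2$, the variable $m_0(x,x+tv)$ has the same distribution as $m_0((0,0),(t,0))$, so $\mathbf E[m_0(x,x+tv)]=\mathbf E[m_0((0,0),(t,0))]$, which is finite and subadditive in $t$ by the previous proposition. Kingman's theorem along the ray $x+\rr v$ produces an a.s.\ limit; since $m_0(x,x+tv)$ and $m_0((0,0),(t,0))$ differ by at most $2\,m_0((0,0),x)$, a quantity that does not depend on $t$, this limit is independent of $x$ and invariant under the whole group $\{T_y\}_{y\in\rr^2}$, hence deterministic by ergodicity and equal to $\lim_t\mathbf E[m_0((0,0),(t,0))]/t=\tau_0$. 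This already gives $m_0(x,x+tv)/t\to\tau_0$ a.s.\ for each \emph{fixed} $x$ and $v$.

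To pass to uniformity I would first reduce to finitely many data by equicontinuity. From Lemma~\ref{chalpha}, whose conclusion is uniform for $x/T$ in a bounded set, one gets a deterministic $C_1$ and, a.s., a threshold $T_0(\omega)$ with $m_0(p,q)\le C_1|p-q|$ whenever $|p|,|q|\le 2Ct$, $1\le|p-q|\le 2Ct$ and $t\ge T_0$: cover $[p,q]$ by $O(|p-q|)$ overlapping rectangles of the channel lemma and splice one regular crossing from each. Thus $m_0$ is $C_1$-Lipschitz in its endpoints on this range, and choosing a $\delta$-net $\{z_k\}$ of $\{|z|\le C\}$ and a $\delta$-net $\{v_j\}$ of $S^1$ it suffices to prove $m_0(tz,tz+tv)/t\to\tau_0$ a.s.\ for each of the finitely many pairs $(z,v)=(z_k,v_j)$, and then let $\delta\to0$. (As a byproduct, the same Lipschitz estimate upgrades the per-direction limit to $\sup_{v\in S^1}|m_0((0,0),tv)/t-\tau_0|\to 0$, i.e.\ the basic shape theorem.)

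For a fixed pair $(z,v)$ the genuine difficulty is that $tz$ escapes every bounded set. For the upper bound I would route the connecting path through the unique infinite regular cluster $\N_\alpha$ of Definition~\ref{clusa}: by Proposition~\ref{conn-alfa} the endpoint $tz$ lies in $\N_\alpha$ or in a bounded component of $\rr^2\setminus\N_\alpha$, which can be crossed using at most $Z_{tz}:=(\#\text{ of its cells})^2$ edges; the size of this component has a stretched-exponential tail — obtained exactly as in the proof of the previous proposition from the i.i.d.\ box structure of Lemma~\ref{chalpha}, and uniformly in the base point — so $\mathbf P(Z_{tz}>\varepsilon t)$ is summable in $t$ and $Z_{tz}=o(t)$ a.s.\ by Borel--Cantelli, and likewise for $tz+tv$. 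Between the two entry points into $\N_\alpha$ one connects inside $\V_\alpha$: since the cells of $\N_\alpha$ are uniformly regular, $\E_\alpha$ is a regular stochastic lattice for which a uniform shape theorem holds by the arguments of \cite{BLB,ACG,ACR,BCR}, with a surface tension $\tau_0^\alpha\ge\tau_0$, and one shows $\tau_0^\alpha\to\tau_0$ as $\alpha\to0$ (an $m_0$-optimal path already spends all but an $o_\alpha(1)$-fraction of its length on cells of $\N_\alpha$). This gives $m_0(tz,tz+tv)\le\tau_0 t(1+\eta)$ for $t$ large and $\alpha$ small. For the matching lower bound I would use a large-deviation bound $\mathbf P(m_0(tz,tz+tv)<(1-\varepsilon)\tau_0 t)\le e^{-c\,t^{\beta}}$ with $\beta>0$, obtained by coarse-graining the segment into many short sub-segments and applying to each the rotation-invariant directional limit together with the tail estimates of the previous proposition; Borel--Cantelli then gives $\liminf_t m_0(tz,tz+tv)/t\ge\tau_0$.

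The step I expect to be hardest is precisely recovering the \emph{sharp} constant $\tau_0$ uniformly over the moving base point: unlike in the periodic or the uniformly-bounded-cell setting there is no two-sided comparison of $m_0$ with Euclidean distance, so one must simultaneously confine the anomalous geometry to the bounded components of $\rr^2\setminus\N_\alpha$ and bound their sizes by the stretched-exponential tails, \emph{and} transport the exact asymptotic rate through the regular cluster $\N_\alpha$ — whose regularity is exactly what makes a uniform shape theorem available — before sending $\alpha\to0$ to close the gap between $\tau_0^\alpha$ and $\tau_0$.
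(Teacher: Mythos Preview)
Your reduction to finitely many pairs $(z,v)$ via an a.s.\ Lipschitz-type bound on $m_0$ (your $C_1$-estimate from the channel lemma) is in line with the paper: the authors first establish the analogous estimate $m_0(x,y)\le C_0|x-y|+\varkappa t+\sqrt t$ for all $x,y$ in a cube of side $\sim t$, and then use it at the end to upgrade pointwise convergence to the claimed uniformity. Your isotropy paragraph is fine.

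The genuine gap is the lower bound for the moving base point. You assert a large-deviation estimate $\mathbf P\bigl(m_0(tz,tz+tv)<(1-\varepsilon)\tau_0 t\bigr)\le e^{-c\,t^\beta}$ ``obtained by coarse-graining the segment into many short sub-segments''. That argument works for the \emph{upper} tail (concatenating near-optimal sub-paths and using subadditivity plus independence at the block scale), but it does \emph{not} give the lower tail: a path realising a small $m_0(tz,tz+tv)$ need not pass through your sub-segment endpoints, so smallness of the global passage time does not force smallness of any of the sub-segment passage times. Lower large deviations in FPP require a different mechanism (barrier/renormalisation arguments showing that any path of the correct scale must traverse many independent ``costly'' blocks), and nothing in ``the tail estimates of the previous proposition'' (which only yield $\mathbf E\,m_0<\infty$ and the existence of $\tau_0$) provides this. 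A secondary loose end is your upper bound: you invoke a \emph{uniform} shape theorem on the regular cluster $\N_\alpha$ with tension $\tau_0^\alpha$ and then $\tau_0^\alpha\to\tau_0$; neither statement is available from the cited references, and the latter is itself nontrivial (the paper only proves monotonicity of $\tau_\alpha$ in $\alpha$).

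The paper avoids large deviations altogether. After the Lipschitz-type bound, fix $(x,v)$ and set $\mathcal A_N=\{\omega:|m_0(0,kv)/k-\tau_0|\le\theta\ \hbox{for all }k\ge N\}$; choose $N_0$ with $\mathbf P(\mathcal A_{N_0})\ge 1-\delta$. By Birkhoff applied to $\mathbf 1_{\mathcal A_{N_0}}$ along the orbit $\{T_{jx}\omega\}_j$, for large $k$ one finds $n\in[k,k+3(\nu+\delta)k]$ with $T_{nx}\omega\in\mathcal A_{N_0}$, so $|m_0(nx,nx+kv)/k-\tau_0|\le\theta$; the Lipschitz estimate then transfers this from $nx$ to $kx$ at cost $O(\nu+\delta+\varkappa)$. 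Letting the parameters $\theta,\nu,\delta,\varkappa\to0$ gives $m_0(kx,kx+kv)/k\to\tau_0$, and the Lipschitz estimate again yields uniformity in $x$ and $v$. This ``Birkhoff + Lipschitz transfer'' handles upper and lower bounds simultaneously and bypasses both the $\alpha$-cluster detour and any concentration input.
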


\begin{proof}
Our first goal is to show that there exists a constant $C_0$ such that a.s.~for any $\varkappa>0$ and $c_1>0$  and for all $t\geq t_0(\omega,c_1)$
we have
\begin{equation}\label{loca_est}
m_0(x,y)\leq C_0|x-y|+\varkappa t+\sqrt{t}
\end{equation}
for all $x$ and $y$ from the cube $\{x\in\mathbb R^2\,:\,|x|_\infty\leq c_1t\}$.   To this end we use again the definition of a cube $Q_{5L}$ given in the proof of Lemma \ref{chalpha} and recall that a site $j\in\mathbb Z^2$ is open if conditions
$(\mathbf{c}_1)$--$(\mathbf{c}_3)$ are fulfilled. We then choose the parameter $\gamma$ in \eqref{prob_cal_e}
sufficiently small
%$\alpha$, $L$ and $K$
so that the open sites form a.s.~an infinite open cluster that we call $\mathcal{C}$.
Then a.s.~for sufficiently large $t$ the diameter of any  $l^\infty$-connected component of sites in
in the complement to the infinite open cluster in  $\{x\in\mathbb R^2\,:\, |x|_\infty\leq (10L)^{-1}c_1t\}$ does not exceed $c_2\log t$ with $c_2>0$, see \cite{Grim99}.
Computing the probability to have in a cube of size  $c_2\log t$ more than $\sqrt{t}$ points of $\mathcal{N}$,
considering the fact that the number of such cubes centred at $j\in\mathbb Z^2$ and belonging to
$\{x\in\mathbb R^2\,:\, |x|_\infty\leq (10L)^{-1}c_1t\}$ grows polynomially in $t$ and using the Borel-Cantelli lemma
we conclude that a.s.~for sufficiently large $t$ we have
\begin{equation}\label{perv_est}
m_0(x,\tilde\pi_\alpha(x))\leq \sqrt{t},\qquad m_0(y,\tilde\pi_\alpha(y))\leq \sqrt{t},
\end{equation}
where $\tilde\pi_\alpha(x)$  is the nearest to $x$ vertex of the union of the Voronoi cells that contain points of the scaled
infinite open cluster   $10L\mathcal{C}$.  

From the results in \cite{GarMar} it follows that a.s.~for sufficiently large $t$, for any two points $j^1$ and $j^2$ of the open
infinite cluster such that $j^1,\,j^2\in\{x\in\mathbb R^2\,:\, |x|_\infty\leq (10L)^{-1}c_1t\}$, and for any $\varkappa>0$
the cluster distance between $j^1$ and $j^2$ is not greater than $C_2|j^1-j^2|+\varkappa t$; here $C_2$ is a positive constant that does not depend on $\varkappa$. 
 Combining this estimate with \eqref{perv_est} we obtain \eqref{loca_est}.

Next, we are going to show that for any $x\in\mathbb R^2$ with $|x|\leq C$ and any $v\in S^1$ the limit relation
\begin{equation}\label{indiv_ineq}
\tau_0 =\lim_{t\to+\infty} {m_0(tx,tx+tv)\over t}
\end{equation}
holds a.s. In view of \eqref{loca_est} it suffices to prove this relation for integer $t$ that tends to $\infty$. In the remaining part of the proof we call this parameter $n$ instead of $t$.

We fix a small positive $\theta>0$ and denote by $\mathcal{A}_N$ the event
$$
\mathcal{A}_N=\big\{\omega\in\Omega\,:\, \Big|\frac{m_0(0,kv)}{k}-\tau_0\Big|\leq\theta\ \hbox{for all }k\geq N\big\}.
$$
Since $\mathbf{P}(\mathcal{A}_N)$ tends to $1$ as $N\to\infty$, for any $\delta>0$ there exists $N_0=N_0(\delta)$ such that
$$
\mathbf{P}(\mathcal{A}_{N_0})\geq 1-\delta.
$$
By the Birkhoff ergodic theorem a.s.~for any $\nu>0$ and $\varkappa>0$ there exists $k_0=k_0(\omega, \nu,\varkappa)$
such that
$$
\Big|\frac1k\sum\limits_{j=1}^k\mathbf{1}_{\mathcal{A}_{N_0}}(T_{jx}\omega)-\mathbf{P}(\mathcal{A}_{N_0})\Big|\leq \nu
$$
for all $k\geq \frac12 k_0$ and moreover inequality \eqref{loca_est} holds for all such $k$.  We assume that $\nu$ and $\delta$
are small enough so that $3(\nu+\delta)\leq\frac12$.

For $k\geq k_0$ denote by $\ell$ the maximum of integers $j$ such that $j>k+1$ and for all $i\in(k,j)$ we have
$T_{ix}\omega\not\in \mathcal{A}_{N_0}$.

Let $M$ be the number of unities in the sequence $\big\{\mathbf{1}_{\mathcal{A}_{N_0}}(T_{ix}\omega)\big\}_{i=1}^k$.
By the definition of $\ell$, the number of unities in  
$\big\{\mathbf{1}_{\mathcal{A}_{N_0}}(T_{ix}\omega)\big\}_{i=1}^{k+\ell}$ is equal to $M$ as well. 

Since $k+\ell>k_0$, we have
$$
\nu>\Big|\frac M{k+\ell}-\mathbf{P}(\mathcal{A}_{N_0})\Big|=
\Big|1-\mathbf{P}(\mathcal{A}_{N_0})-\frac{\ell+(k-M)}{k+\ell}\Big|.
$$
This yields
$$
\frac{\ell+(k-M)}{k+\ell}<\nu+1-\mathbf{P}(\mathcal{A}_{N_0})\leq\nu+\delta.
$$
Since $k-M\geq 0$, recalling that $\nu+\delta\leq \frac16$ we obtain $\ell\leq 2(\nu+\delta)k$.

For an arbitrary $k>\max\big(k_0, N_0\big)$ and $L=3(\nu+\delta)k$ there exists 
$n\in [k,k+L]$ such that $T_{nx}\omega\in\mathcal{A}_{N_0}$. Then %for any $k>N_0$ 
we have
\begin{equation}\label{alm_req}
\Big|\frac1k m^\omega_0(nx,nx+kv)-\tau_0\Big|=\Big|\frac1k m^{T_{n\!x}\omega}_0(0,kv)-\tau_0\Big|\leq\theta.
\end{equation}
Since $n-k\leq 3(\nu+\delta)k$ and $k>k_0$, then by \eqref{loca_est}
$$
\big|m_0(nx,nx+kv)-m_0(kx,kx+kv)\big|\leq [3C_0C(\nu+\delta)+\varkappa]k+\sqrt{k}.
$$
Dividing by $k$ and considering \eqref{alm_req} we obtain
$$
\Big|\frac1k m^\omega_0(kx,kx+kv)-\tau_0\Big|\leq \theta+ [3C_0C(\nu+\delta)+\varkappa]+\frac1{\sqrt{k}}.
$$
It remains to take into account the fact that $\theta$, $\nu$, $\delta$ and $\varkappa$ are arbitrary positive number,
and \eqref{indiv_ineq} follows.

In view of estimate \eqref{loca_est} the pointwise convergence in \eqref{indiv_ineq} implies the uniform
convergence in \eqref{uniform_b} for $|x|\leq Ct$.  This completes the proof.
\end{proof}

\begin{proposition}[coerciveness of the surface tension]
We have $\tau_0>0$.
\end{proposition}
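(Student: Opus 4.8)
\noindent\textit{Proof proposal.}
The plan is to produce, almost surely for all large $t$, of order $t$ pairwise cell-disjoint ``vertical walls'' of Voronoi cells lying in a vertical strip strictly inside $\{0<x_1<t\}$, each wall being a connected chain of cells joining the top and bottom of a large rectangle that is guaranteed to contain the range of every minimizer of $m_0((0,0),(t,0))$. Since such a minimizer $\gamma$ is a simple arc running from $\pi_0((0,0))\approx(0,0)$ to $\pi_0((t,0))\approx(t,0)$, it must cross each wall, hence border at least one cell of each, and these cells are distinct because the walls are cell-disjoint. A path with $N$ Voronoi edges borders at most $N+1$ distinct cells: consecutive edges share a vertex, at which (a.s.) exactly three cells meet, so each new edge adds at most one new cell. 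Hence $m_0((0,0),(t,0))\ge c\,t-1$ for a deterministic $c>0$, and therefore $\tau_0=\lim_t m_0((0,0),(t,0))/t\ge c>0$.

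First I would confine $\gamma$. By the previous proposition $\tau_0<+\infty$, so $m_0((0,0),(t,0))\le(\tau_0+1)t$ for large $t$. A Voronoi edge of length $\ell$ forces an empty disk of radius $\ge\ell/2$, and $\mathbf{P}(\exists\text{ an empty disk of radius }r\text{ in }[-t^2,t^2]^2)\le Ct^4e^{-\pi r^2}$, so by Borel--Cantelli, a.s.\ for large $t$ every Voronoi edge meeting $[-t^2,t^2]^2$ has length $\le 4\sqrt{\log t}$. If $\gamma$ left $[-\tfrac12 t^2,\tfrac12 t^2]^2$ it would contain a subarc from $\pi_0((0,0))$ (at distance $O(1)$ from the origin) to a point at distance $\tfrac12 t^2$, of Euclidean length $\ge\tfrac14 t^2$ but made of at most $(\tau_0+1)t$ edges each of length $\le 4\sqrt{\log t}$, which is impossible for large $t$. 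Hence, a.s.\ for large $t$, $\gamma\subset[-\tfrac12 t^2,\tfrac12 t^2]^2$; in particular $\gamma$ stays in a vertical band of height $t^2$.

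Next I would build the walls, keeping $L,K,\alpha$ and the i.i.d.\ ``open site'' percolation on $10L\mathbb{Z}^2$ of the proof of Lemma~\ref{chalpha} (chosen as supercritical as we like), so that when a site and all its neighbours are open the nearby Voronoi cells are regular, i.e.\ lie in $\N^0_\alpha$ and have diameter $\le 1/\alpha$. Applying the channel property of supercritical Bernoulli percolation \cite{Kes} to the rectangle $R_t:=[\tfrac25 t,\tfrac35 t]\times[-t^2,t^2]$ — whose width is $\Theta(t)$, height $\Theta(t^2)$, so that its elongation is only polynomial in $t$ — a.s.\ for large $t$ it contains at least $\rho t$ disjoint open $\mathbb{Z}^2$-crossings of its long (vertical) direction, the failure probability being $O(t^2e^{-ct})$, which is summable. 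As in the proof of Lemma~\ref{chalpha}, each such $\mathbb{Z}^2$-crossing produces a connected chain of regular cells of diameter $\le 1/\alpha$ which stays inside $(\tfrac25 t,\tfrac35 t)\times\mathbb{R}$ and reaches above $\{x_2=t^2\}$ and below $\{x_2=-t^2\}$; after thinning by a fixed constant (so chains from adjacent $\mathbb{Z}^2$-crossings share no cell), we keep $\ge\rho' t$ of them, pairwise cell-disjoint.

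Finally, a.s.\ for large $t$ the minimizer $\gamma$ is a simple arc in $[-\tfrac12 t^2,\tfrac12 t^2]^2$ with $x_1$-coordinate running from $\approx 0<\tfrac25 t$ to $\approx t>\tfrac35 t$, so it contains a subarc $\sigma$ joining the two vertical sides of $R_t$ with $|x_2|\le\tfrac12 t^2$; completing $\sigma$ by two horizontal rays to a proper simple arc separating the plane into an upper and a lower region shows that each wall $K$ (which reaches above $\{x_2=t^2\}$ and below $\{x_2=-t^2\}$ while $\sigma$ does not) must meet $\sigma$, and since $\gamma\subset\V$ never enters a cell interior, $\gamma$ then borders a cell of $K$ (even when $\gamma\cap K$ is a Voronoi vertex, the two edges of $\gamma$ at that vertex together border all three cells meeting there). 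Cell-disjointness of the $\ge\rho' t$ walls then forces $\gamma$ to border $\ge\rho' t$ distinct cells, and the counting in the first paragraph gives $\tau_0\ge\rho'>0$. The step I expect to be the main obstacle is exactly this channel-property input in the elongated regime: $R_t$ has an aspect ratio growing with $t$, so Lemma~\ref{chalpha} (stated for fixed $\delta$ and $x/T$ in a bounded set) cannot be quoted verbatim, and one must re-run its proof on $R_t$ using the quantitative supercritical estimate of \cite{Kes} — order ``width'' many disjoint crossings of the long direction of a box with at most polynomial elongation, with failure probability exponentially small in the width — and check that the passage from $\mathbb{Z}^2$-crossings to pairwise-disjoint chains of regular cells survives. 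The remaining planar-topology and ``meeting a wall implies bordering a cell of it'' points are routine given the generic-position hypotheses of the paper.
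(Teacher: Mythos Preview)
Your argument is correct in outline, but the paper takes a far shorter route. It simply applies Pimentel's polyomino lemma (Lemma~\ref{pimentel}) to the connected union $P$ of all Voronoi cells bordering a minimal path $\{e_k\}$ for $m_0((0,0),(t,0))$: since $P$ contains points near the origin and near $(t,0)$ it meets at least $t(1+o(1))$ unit lattice squares, so Pimentel gives $\#\{C_i\subset P\}\ge t/C$, and as each edge borders only two cells this yields $\#\{e_k\}\ge t/(2C)$, hence $\tau_0>0$. The whole proof is two lines and reuses a tool already set up for the compactness lemma.

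Your route avoids Pimentel's lemma entirely, building $\Theta(t)$ cell-disjoint vertical barriers via the channel property and forcing the minimizer to cross each. The step you flag is genuinely the delicate one: Lemma~\ref{chalpha} is stated for fixed aspect ratio, whereas you need $\Theta(t)$ disjoint long-direction crossings of a box of dimensions roughly $t\times t^2$. Your proposed fix (push the open-site probability $1-\gamma$ close to $1$ and run the Peierls counting directly, getting failure probability $O(t^2 e^{-ct})$) does work, but it means redoing a percolation estimate rather than citing one. Your confinement, thinning, and planar-crossing steps are fine. In the end your proof is substantially longer and needs an extra quantitative percolation input, while the paper's gets coerciveness almost for free from the polyomino comparison already in hand; the compensation is that your argument is independent of Pimentel's lemma and closer in spirit to standard first-passage-percolation lower bounds.
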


\begin{proof} Given $t>0$ take a minimal path $\{e_k\}$ for $m_0((0,0),(0,t))$.
%and note that the maximal length of $e_k$ is at most $\log t$.
%Hence $$\#\{e_k\}\ge C{t\over\log t}$$
We can apply Lemma \ref{pimentel} with $\e=1/t$, $R=1$, $\gamma=1/2$, and
$P\in\Pi$ with $e_k\subset P$ for all $k$.
We then have
$$
t(1+o(1))\le \#{\bf A}(P)\le C\#\{C_i: C_i\subset P\}\le C\#\{e_k\},
$$
which shows the claim, since the constant in this estimate are independent of $t$.
\end{proof}

\begin{proposition}\label{est-dist}
 There exists a constant $C_0$ such that if $t$ is large enough then if  $\{e_k\}$ is a test path for $m_0(x,x+tv)$ with $x$ as in Proposition {\rm\ref{unif}} with $\#(\{e_k\})\le t M$, then each point of $\{e_k\}$ is at most at distance $C_0Mt$ from $x$.
\end{proposition}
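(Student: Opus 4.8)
The plan is to transpose the proof that $\tau_0>0$ given above, keeping track of \emph{where} the test path goes: if the polygonal curve made of the edges $e_k$ visited a point at distance $D$ from $x$, then the union $P$ of the Voronoi cells adjacent to the $e_k$ would be a connected polyomino in $\Pi$ of diameter at least $D$ but consisting of at most $2\,\#\{e_k\}\le 2tM$ cells, and Pimentel's Lemma~\ref{pimentel} would then force $D$ to be $O(tM)$. Write $e_k=[x_{k-1},x_k]$ for $1\le k\le K$, with $x_0=\pi_0(x)$, $x_K=\pi_0(x+tv)$, $K=\#\{e_k\}\le tM$, and each $x_k\in\N^*$. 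Since the norm is convex, every point of the segment $[x_{k-1},x_k]$ lies within $\max(|x_{k-1}-x|,|x_k-x|)$ of $x$, so it suffices to bound $\max_k|x_k-x|$; and a standard estimate on the diameters of the Voronoi cells meeting a cube of side $O(t)$ (see e.g.~\cite{Calka}) gives $\sup_{|x|\le Ct}|\pi_0(x)-x|=O(\log t)$ a.s.~as $t\to\infty$, where $C$ is the constant in the bound $|x|\le Ct$ of Proposition~\ref{unif}, so that $|\pi_0(x)-x|\le tM$ for $t$ large and, by the triangle inequality, it remains to bound $D:=\max_k|x_k-x_0|$.

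Next I would take $P$ to be the union over $k$ of the two Voronoi cells sharing the edge $e_k$. Every Voronoi edge is shared by exactly two cells, so $\#\{i:C_i\subset P\}\le 2K\le 2tM$; and $P\in\Pi$, because two consecutive edges $e_k$, $e_{k+1}$ meet at a vertex $v\in\N^*$, the three cells at $v$ pairwise share an edge, the pair of cells of $e_k$ and the pair of cells of $e_{k+1}$ both lie among these three and hence have a common cell, and, chaining along the path, $P$ is connected in the sense of $\Pi$. As $x_0\in P$, every $x_k\in P$, and $P$ is connected, we have $\mathrm{diam}\,P\ge D$; projecting $P$ onto a suitable coordinate axis---the image being an interval of length at least $D/\sqrt2$---then gives $\#{\bf A}(P)\ge D/2$ whenever $D$ exceeds an absolute constant.

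Then I would apply Lemma~\ref{pimentel} with $\e=1/t$, $\gamma=\tfrac12$ and the fixed value $R=2C+2$. If $D\le 2\sqrt t$ then $D\le tM$ for $t$ large and we are done; otherwise $\#{\bf A}(P)\ge D/2\ge\sqrt t=\e^{-\gamma}$, while $\pi_0(x)\in P$ with $|\pi_0(x)|_\infty\le|x|_\infty+O(\log t)<Rt/2$, so that $P\cap\tfrac{R}{\e}Q\ne\emptyset$. Thus the hypotheses of the lemma hold, and \eqref{eqpap} gives $\#{\bf A}(P)\le C'\,\#\{i:C_i\subset P\}\le 2C'tM$, with $C'$ the constant of Pimentel's lemma; together with $\#{\bf A}(P)\ge D/2$ this yields $D\le 4C'tM$. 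Combined with the reductions of the first paragraph, $\max_k|x_k-x|\le C_0tM$ for a deterministic $C_0$ and all $t\ge t_0(\omega,M)$, which is the assertion.

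The argument is short; the only delicate points are uniformity matters---that $\#\{i:C_i\subset P\}$ is controlled by $\#\{e_k\}$ however long individual edges may be, that a single $R$ works for all $x$ with $|x|\le Ct$, and that $\sup_{|x|\le Ct}|\pi_0(x)-x|$ is a.s.~negligible compared with $tM$. The last is a routine Poisson/Borel--Cantelli estimate of the kind already used in the proof of Proposition~\ref{unif}.
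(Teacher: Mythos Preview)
Your proof is correct and follows essentially the same approach as the paper: apply Pimentel's Lemma~\ref{pimentel} with $\e=1/t$ to the connected polyomino $P\in\Pi$ of Voronoi cells adjacent to the path, bound $\#\{i:C_i\subset P\}$ by a multiple of $\#\{e_k\}\le tM$, and deduce that $\#{\bf A}(P)$---and hence the diameter of the region covered---is $O(tM)$. Your version is more carefully argued (you make the choice of $R$, $\gamma$, the connectedness of $P$, and the control of $|\pi_0(x)-x|$ explicit), but the strategy is identical to the paper's two-line proof.
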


\begin{proof}
It suffices to apply Lemma \ref{pimentel} to the set of all Voronoi cells with non empty intersection with $\bigcup_ke_k$ and $\e=1/t$. We then cover $\bigcup_ke_k$ with the union of at most $2C_0 Mt$ cubes, from which the claim follows.
\end{proof}

\subsection{Computation of the $\Gamma$-limit}

%\begin{proof}
{\em Lower bound.} We use an argument typical of the blow-up technique \cite{FM,BMS}.

Let $u^\e\to A$. Since $A$ is of finite perimeter, with fixed $\sigma>0$ and $\delta>0$ we consider a disjoint finite family of rectangles
%({\color{red} maybe cubes is sufficient})
$$
R_i=\Bigl\{x: |\langle x-x_i,\nu_i\rangle|\le \delta\rho_i, |\langle x-x_i,\nu_i^\perp\rangle|\le {1\over 2}\rho_i\Bigr\}
$$
such that
$$
{\cal H}^1\Bigl(\partial A\setminus \bigcup_i R_i\Bigr)\le\sigma
\qquad\hbox{
and
}\qquad
\Bigl|\sum_i\rho_i-{\cal H}^1(\partial A)\Bigr|\le\sigma.
$$

Since $A_\e\to A$ we may assume that
$$
{\cal L}^2(A_\e\cap R^+_i) = o(1), \qquad {\cal L}^2((A\setminus A_\e)\cap R^+_i) = o(1)
$$
as $\e\to 0$, where
$$
R^\pm_i= R_i\pm 2\delta\rho_i\nu_i.
$$

We now fix an index $i$. We use the channel property in Lemma \ref{chalpha} to find a path $\{\e C^+_j\}$
joining the two sides of $R^+_i$ parallel to $\nu_i$, with $j$ endpoints of segments of a path in $\E_\alpha$, and such that
$$
{\cal L}^2\Bigl(A_\e\cap R^+_i\cap \e \bigcup_j C^+_j\Bigr)\le {\e\over  C_\delta\rho_i}
{\cal L}^2(A_\e\cap R^+_i),
$$
which follows from the existence of a number of disjoints paths proportional to $\rho_i$.

Note that, since $|\e C^+_j|\ge \pi\e^2\alpha^2$, we have
$$
\#\{ j: \e C^+_j\subset A_\e\}\le {1\over \pi\e^2\alpha^2} {\cal L}^2\Bigl(A_\e\cap R^+_i\cap \e \bigcup_j C^+_j\Bigr)
\le  {1\over \e\pi C_\delta\rho_i\alpha^2}
{\cal L}^2(A_\e\cap R^+_i).
$$

Similarly, we define $\{C^-_j\}$ joining the two sides of $R^-_i$ parallel to $\nu_i$, and such that
$$
{\cal L}^2\Bigl((A\setminus A_\e)\cap R^-_i\cap \e \bigcup_j C^-_j\Bigr)\le {\e\over C_\delta\rho_i}
{\cal L}^2((A\setminus A_\e)\cap R^-_i),
$$
so that
$$
\#\{ j: \e C^-_j\subset (A\setminus A_\e)\}
\le  {1\over \e \pi C_\delta\rho_i\alpha^2}
{\cal L}^2(((A\setminus A_\e)\cap R^-_i).
$$

We define $U^+_\e$ as the connected component of $R^+_i\setminus \e\bigcup_j C^+_j$ containing the upper side
$S^+_i=\{x\in R^+_i: \langle x-x_i,\nu_i\rangle= 3 \delta\rho_i\}$ and $U^-_\e$ as the connected component of $R^-_i\setminus \e\bigcup_j C^-_j$ containing the lower side
$S^-_i=\{x\in R^-_i: \langle x-x_i,\nu_i\rangle= -3 \delta\rho_i\}$, and define
$$
\widetilde A_\e=(A_\e\setminus U^+_\e)\cup U^-_\e.
$$

We now consider the connected component of the set $(R_i\cup R^+_i\cup R^-_i)\setminus \widetilde A_\e$ containing the upper side $S^+_i$. Note that this connected component does not contain $S^-_i$, so that it contains a path of edges $\{e^\e_k\}$ in $\cal V$
connecting the two sides of $R_i\cup R^+_i\cup R^-_i$ parallel to $\nu_i$. We denote by $x^\pm_{\e}$ the extreme points of this path.

Using Proposition \ref{unif}, we can now estimate
\begin{eqnarray*}
\#\{\hbox{edges of } \partial V_\e(u^\e) \hbox{ inside } R_i\}
&\ge&
\#\{\hbox{edges of } \partial A_\e \hbox{ inside } R_i\}
\\
&\ge& \#\{e^\e_k\}-  {1\over \e \pi C_\delta\rho_i\alpha^2}
 o(1)
\\
&\ge &
m_0(x^-_{\e},x^+_{\e})- {1\over \e \pi C_\delta\rho_i\alpha^2}
 o(1)\\
&\ge &
(\tau_0+o(1)){\rho_i\over\e}-  {1\over \e \pi C_\delta\rho_i\alpha^2}
 o(1).
\end{eqnarray*}
%for some $x^\pm_i\in R_i$ with
%$$
%|\langle x^\pm_{i,\e}-x_i,\nu_i^\perp\rangle|>({1\over 2}-\delta) \rho_i.
%$$
%Hence
%\begin{eqnarray*}
%\#\{\hbox{edges of } \partial V_\e(u^\e) \hbox{ inside } R_i\}
%&\ge& (1-4\delta)\rho_i(\tau +o(1))
%\end{eqnarray*}
Summing up in $i$ we then get
$$
\liminf_{\e\to0}E_\e(u^\e)\ge \sum_i\rho_i \tau_0\ge \tau_0({\cal H}^1(\partial A)-\sigma)
$$
and prove the claim by the arbitrariness of  $\sigma$.

\bigskip
{\it Upper bound.} By an approximation argument \cite{GCB,LN98} it is sufficient to prove the upper bound for polyhedral sets. Moreover, we can just deal with a single connected bounded polyhedron $A$ with a connected boundary since all other cases can be reduced to that by considering union or complements of such sets.

We write the boundary of $A$ as the union of segments $[x_{j-1}, x_j]$ with endpoints $x_0,\ldots, x_N\in\rr^2$ with $x_N=x_0$. With fixed
% $\alpha<\alpha_0$,
$m\in\NN$ and $\delta>0$, for all $j\in\{1,\ldots,N\}$ and  $l\in \{1,\ldots, m\}$
we consider a non-intersecting path $\{e^{j,l}_k\}$ in $\V$ between $\pi_0(x^\e_{j,m-1})$ and $\pi_0(x^\e_{j,m})$, where
$$
x^\e_{j,m}={1\over\e}\Bigl( x_{j-1}+{l\over m}(x_j-x_{j-1})\Bigr),
$$
such that
\begin{equation}
\#\{e^{j,l}_k\}\le {1\over m\e}|x_j-x_{j-1}|(\tau_0 +\delta).
\end{equation}

Denoting the union of the rescaled paths
$$
B^{\delta, m}_\e=\e\bigcup_{j,l,k}e^{j,l}_k
$$
let $A^{\delta, m}_\e$ be the complement of the infinite connected component of $\rr^2\setminus B^{\delta, m}_\e$ (note that the paths $\{e^{j,l}_k\}$ may intersect, so that there may be more than one bounded connected component of the complement of their union).
If $u^\e$ is defined as
\begin{equation}
u^\e_i=\begin{cases} 1 & \hbox{ if }i\in A^{\delta, m}_\e\\
 -1 & \hbox{ if }i\not\in A^{\delta, m}_\e,\end{cases}
\end{equation}
then we have
\begin{eqnarray}\label{estiam}\nonumber
E_\e(u^\e)&\le& \e\sum_{j,l,k}\#\{e^{j,l}_k\}\le \sum_{j,l}{1\over m}|x_j-x_{j-1}|(\tau_0 +\delta) \\
&=& \HH^1(\partial A)(\tau_0+\delta),
\end{eqnarray}
since the boundary of $A^{\delta, m}_\e$ is contained in $B^{\delta, m}_\e$.

By Lemma \ref{set-compactness}, thanks to \eqref{estiam} these sets converge as $\e\to 0$ to a set of finite perimeter $A^{\delta, m}$, and
\begin{equation}
\Gamma\hbox{-}\limsup_{\e\to0} E_\e(A^{\delta, m})\le \HH^1(\partial A)(\tau_0+\delta).
\end{equation}

Thanks to Proposition \ref{est-dist}
%
%We note now that the length of each path $\e\{e^{j,l}_k\}$ can be estimated as
%\begin{equation}\label{lensem}
%\e \HH^1\Bigl(\bigcup_k\{e^{j,l}_k\}\Bigr)\le {\e\over\alpha}\#\{e^{j,l}_k\}\le {1\over m\alpha}|x_j-x_{j-1}|(\tau +\delta)\le{C\over m\alpha},
%\end{equation}
%so that
each point of $\e \{e^{j,l}_k\}$ is at most at a distance $C/m$ from the segment
$[\e \pi_0(x^\e_{j,m-1}),\e\pi_0(x^\e_{j,m})]$, and hence, since  $$\lim_{\e\to0}\e \pi_0(x^\e_{j,m})=x_{j-1}+{l\over m}(x_j-x_{j-1}),$$
the boundary of $A^{\delta, m}$ is contained in a $C/m$-neighbourhood of $\partial A$. This implies that $A^{\delta, m}$ converge to $A$ as $m\to+\infty$ independently of $\delta$. By the lower semicontinuity of the $\Gamma$-limsup \cite{GCB} we then deduce that
\begin{eqnarray*}
\Gamma\hbox{-}\limsup_{\e\to0} E_\e(A)\le\lim_{m\to+\infty}
\Gamma\hbox{-}\limsup_{\e\to0} E_\e(A^{\delta, m})\le \HH^1(\partial A)(\tau_0+\delta),
\end{eqnarray*}
and the claim is proved.
%\end{proof}

\section{Approximate surface tensions}
In this final section we consider the restriction of the energies $E_\e$ to (spin functions with related) sets whose boundary is composed of edges of $\alpha$-regular Voronoi cells. We denote by $E^\alpha_\e$ such energies. Note that in this case $E^\alpha_\e(u^\e)$ immediately gives the equiboundedness of the perimeter of the sets $V_\e(u^\e)$ and hence their precompactness. We briefly describe the limit of $E^\alpha_\e$ at fixed $\alpha$.

With given $\alpha<\alpha_0$ as in Proposition \ref{conn-alfa} we define for all $x\in\rr^2$
$$
\pi_\alpha(x)=\hbox{ closest point of $\N_\alpha^*$ to $x$}.
$$
For almost all $x$ this point is uniquely defined. For the remaining points we choose one of the closest points of $\N_\alpha^*$ to $x$.
For all $x,y\in \rr^2$ we set
$$
m_\alpha(x,y)=\min\{\#\{e_k\}: \{e_k\} \hbox{ is a path in $\V_\alpha$ connecting $\pi_\alpha(x)$ and $\pi_\alpha(y)$}\}.
$$

%

%We define
%$$
%\pi(x)=\hbox{ closest point of $\N^*$ to $x$}.
%$$
%For almost all $x$ this point is uniquely defined. For the remaining points we choose one of the closest points of $\N^*$ to $x$.

%
%For all $x,y\in \rr^2$ we set
%$$
%m(x,y)=\min\{\#\{e_k\}: \{e_k\} \hbox{ is a path in $\cal V$ connecting $\pi(x)$ and $\pi(y)$}\},
%$$
%where a path of segments (in our case edges in $\cal V$) connecting two points $\overline x$ and $\overline y$ is a collection of segments $[x_{k-1}, x_k]$ with $1\le k\le K$ for some $K\in\NN$ such that $x_0=\overline x$ and $x_K=\overline y$, and such that the related piecewise-linear curve is not self-intersecting.

\begin{proposition}\label{sute-alfa}
For all $\alpha<\alpha_0$ a.s.~the limit
$$
\tau_\alpha =\lim_{t\to+\infty} {m_\alpha(x,x+tv)\over t}
$$
exists for all $v\in S^1$, and the limit is uniform for $x=x(t)$ if $|x|\le Ct$ and $v\in S^1$.
Furthermore $\tau_\alpha\in (0,+\infty)$.
\end{proposition}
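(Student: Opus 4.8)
The plan is to derive all three assertions --- existence and uniformity of the limit, its isotropy (the same constant $\tau_\alpha$ for every $v\in S^1$), and $\tau_\alpha\in(0,+\infty)$ --- by transcribing, with $m_0$ replaced by $m_\alpha$ and $\V$ by $\V_\alpha$, the three propositions proved above for $\tau_0$: the one giving \eqref{expe_fin}, Proposition \ref{unif}, and the coerciveness proposition. One first notes that $m_\alpha(x,y)<+\infty$ for all $x,y$ because $\N_\alpha$ is connected (Proposition \ref{conn-alfa}), so $\pi_\alpha(x)$ and $\pi_\alpha(y)$ are always joined by a path in $\V_\alpha$. The recurring point to check is that the competitor paths built in those proofs --- which all run along the channel structure of Lemma \ref{chalpha} --- may be realized inside $\V_\alpha$: this is precisely $(\mathbf{s}1)$--$(\mathbf{s}3)$ in the proof of Lemma \ref{chalpha}, by which every Voronoi cell meeting a segment $I_{j',j''}$ between two neighbouring open sites belongs to $\N_\alpha$, hence has all its edges in $\V_\alpha$.

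\emph{Finiteness of the expectation.} Following the proof giving \eqref{expe_fin}, call a site $j\in\ZZ^2$ \emph{open} when $(\mathbf{c}_1)$--$(\mathbf{c}_3)$ hold, with $L$, $K$, $\alpha$ chosen so that $C_2^2\gamma<\tfrac14$ for the $\gamma$ of \eqref{prob_cal_e}, $C_2$ being the exponential bound on the number of $l^\infty$-connected sets of a given size through the origin. Let $S(n)$ be the maximal $l^\infty$-connected component of closed sites trapping $(0,0)$, enlarged by one lattice step so as to trap $(1,0)$ as well (these two points lying in neighbouring boxes). Then $\pi_\alpha((0,0))$ and $\pi_\alpha((1,0))$ lie in the $10L$-neighbourhood $\mathcal{S}_1(n)$ of $10L\,S(n)$, and a path in $\V_\alpha$ joining them is built by routing from each $\pi_\alpha(\cdot)$ through the $\alpha$-regular cells contained in $\mathcal{S}_1(n)$ out to the surrounding open circuit, and then once around $\partial S(n)$ along its channel segments $I_{j',j''}$. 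By $(\mathbf{s}1)$--$(\mathbf{s}3)$ and $(\mathbf{c}_2)$ each such segment is crossed by at most $2K$ cells with at most $K$ edges apiece, so the channel part costs $O(K^2 n)$ edges of $\V_\alpha$, while each access part costs at most the number of cells in $\mathcal{S}_1(n)$, hence $\le C(k+nK)$ if $\mathcal{S}_0(n)$ carries $k$ points of $\N$. Thus $m_\alpha((0,0),(1,0))\le P(k,n)$ for a fixed polynomial $P$, and the bound
$$
\mathbf{E}\big(m_\alpha((0,0),(1,0))\big)\le\sum_{n,k\ge0}C_2^n\big(\gamma^n\big)^{1/2}\Big(\tfrac{(L_0n)^k}{k!}e^{-L_0n}\Big)^{1/2}P(k,n)
$$
converges exactly as there, the $k$-sum being polynomial in $L_0n$ and $C_2^n(\gamma^n)^{1/2}=e^{(\log C_2+\frac12\log\gamma)n}$ summable. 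For $t>1$ one uses subadditivity, so $\mathbf{E}(m_\alpha((0,0),(t,0)))<+\infty$ for every $t$. Since $m_\alpha$ is covariant under the translation group $T_x$, which is ergodic, the Kingman subadditive ergodic theorem \cite{Kingman} yields the a.s.\ deterministic limit $\tau_\alpha=\lim_{t\to\infty}m_\alpha((0,0),(t,0))/t<+\infty$; the same applied along any fixed line, together with the rotation invariance in law of $\N$ (hence of $\N_\alpha$), shows that $\lim_k m_\alpha(0,kv)/k$ exists, is deterministic and equals $\tau_\alpha$ for every $v\in S^1$.

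\emph{Uniformity.} I would now reproduce the proof of Proposition \ref{unif}. Its key step is the local estimate
$$
m_\alpha(x,y)\le C_0|x-y|+\varkappa t+\sqrt t\qquad\text{for all }|x|_\infty,|y|_\infty\le c_1t,
$$
obtained by joining $\pi_\alpha(x)$ and $\pi_\alpha(y)$ to the scaled infinite open cluster at cost $\le\sqrt t$ --- using that a.s.\ for large $t$ the closed $l^\infty$-components in the relevant box have diameter $O(\log t)$ and, by a Borel--Cantelli argument over the polynomially many boxes of side $c_2\log t$, each carries at most $\sqrt t$ points of $\N$ and hence at most $\sqrt t$ $\alpha$-regular cells, with $\le1/\alpha$ edges each --- and then by the chemical-distance estimate of \cite{GarMar}, cluster distance $\le C_2|j^1-j^2|+\varkappa t$, converted into a $\V_\alpha$-edge count through $(\mathbf{s}1)$--$(\mathbf{s}3)$. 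The remainder of that proof --- the argument with the events $\mathcal{A}_{N}$, combining Kingman in a fixed direction with the Birkhoff theorem applied to $\mathbf{1}_{\mathcal{A}_{N_0}}\circ T_{jx}$ --- carries over verbatim, $m_\alpha$ being $T_x$-covariant, and together with the local estimate gives the uniform convergence $\tau_\alpha=\lim_{t\to\infty}m_\alpha(x,x+tv)/t$ for $|x|\le Ct$ and $v\in S^1$.

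\emph{Positivity.} Finally, $\tau_\alpha>0$ is obtained as in the coerciveness proposition: for large $t$ take a minimal path $\{e_k\}$ for $m_\alpha((0,0),(0,t))$; it joins $\pi_\alpha((0,0))$ and $\pi_\alpha((0,t))$, which a.s.\ lie within $O(\log t)$ of $(0,0)$ and $(0,t)$, so it meets $\tfrac1\e Q$ with $\e=1/t$, $R=1$, and spans vertical extent at least $t/2$. Taking $P\in\Pi$ to be the union of the Voronoi cells meeting $\bigcup_k e_k$, one has $\#\mathbf{A}(P)\ge t/2>\e^{-1/2}$, so Lemma \ref{pimentel} with $\gamma=\tfrac12$ applies and \eqref{eqpap} gives
$$
\tfrac{t}{2}\le\#\mathbf{A}(P)\le C\,\#\{i:C_i\subset P\}\le C'\,\#\{e_k\}=C'\,m_\alpha((0,0),(0,t)),
$$
whence $\tau_\alpha\ge 1/(2C')>0$. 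The step I expect to be the main obstacle is the finiteness of the expectation --- specifically, checking that the percolation detour around a closed cluster of size $n$ can be realized inside $\V_\alpha$ with a number of edges bounded by a fixed polynomial in $n$ and in the number of Poisson points in an $O(n)$-area neighbourhood; once this is secured, the rest is a faithful transcription of the $\tau_0$ arguments.
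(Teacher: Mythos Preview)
Your proposal is correct and matches the paper's own approach, whose proof reads in its entirety: ``The proof follows that for $\tau_0$, and is actually simpler as bounds for $m_\alpha(x,x+tv)$ are easier.'' The simplification you do not fully exploit is that every cell of $\N_\alpha$ has diameter at most $1/\alpha$ and at most $1/\alpha$ edges, so (i) positivity follows directly from $m_\alpha((0,0),(0,t))\ge \alpha\,|\pi_\alpha((0,0))-\pi_\alpha((0,t))|$ without invoking Lemma~\ref{pimentel}, and (ii) the percolation detour around a closed cluster of size $n$ runs entirely through open sites where $(\mathbf{c}_2)$ already bounds the cell count, so it costs $O(n)$ edges in $\V_\alpha$ with no need to track the Poisson count $k$ --- this is what makes the expectation bound, your declared ``main obstacle'', in fact easier here than for $\tau_0$.
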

\begin{proof}
The proof follows that for $\tau_0$, and is actually simpler as bounds for $m_\alpha(x,x+tv)$ are easier
\end{proof}

\begin{theorem}[homogenization on the $\alpha$-cluster]
For $\alpha<\alpha_0$ almost surely there exists the $\Gamma$-limit of $E^\alpha_\e$ and it equals $\tau_\alpha{\cal H}^1(\partial A)$.
\end{theorem}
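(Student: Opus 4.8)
The plan is to follow the same two-part scheme used for the homogenization theorem (Theorem \ref{HoT}), namely proving a compactness-type statement, a lower bound, and an upper bound, but now the compactness is immediate and several of the technical obstructions disappear. First I would observe that for $u^\e$ with $\sup_\e E^\alpha_\e(u^\e)<+\infty$, by definition the boundary of $V_\e(u^\e)$ consists of edges of $\alpha$-regular Voronoi cells, each of which has diameter at most $1/\alpha$ and contains a ball of radius $\alpha$; hence $\e$ times the number of boundary edges controls $\mathcal H^1(\partial V_\e(u^\e))$ up to a multiplicative constant $C(\alpha)$, and the sets $V_\e(u^\e)$ are precompact in $L^1_{\rm loc}$ with limits of finite perimeter by the standard compactness of sets of equibounded perimeter \cite{LN98,Maggi}. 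So the domain of the $\Gamma$-limit is the family of sets of finite perimeter, and no analogue of Lemma \ref{Voronoi-comp} (i.e., no Pimentel splitting into small sets $B'_\e$, $B''_\e$) is needed.

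For the lower bound I would repeat the blow-up argument from the proof of the lower bound in Section \ref{s_peop_st}, using the channel property of $\N_\alpha$ (the remark following Definition \ref{clusa}) in place of Lemma \ref{chalpha}, and the asymptotic formula for $m_\alpha$ from Proposition \ref{sute-alfa} in place of that for $m_0$ from Proposition \ref{unif}. Concretely: fix $\sigma,\delta>0$, cover $\partial A$ up to $\mathcal H^1$-error $\sigma$ by disjoint rectangles $R_i$ with normals $\nu_i$ and lengths $\rho_i$; in each dilated rectangle $R^+_i$ (resp.\ $R^-_i$) select, via the channel property, a path of cells $C^+_j$ with $j\in\N_\alpha$ carrying only an $o(1)$ fraction of the measure of $A_\e\cap R^+_i$ (resp.\ $(A\setminus A_\e)\cap R^-_i$); modify $A_\e$ to $\widetilde A_\e$ by removing/adding the corresponding connected components; extract from the boundary of $\widetilde A_\e$ inside $R_i$ a non-self-intersecting path $\{e^\e_k\}$ in $\V_\alpha$ crossing $R_i$ between its two sides parallel to $\nu_i$; and bound the number of boundary edges inside $R_i$ below by $m_\alpha(x^-_\e,x^+_\e)\ge (\tau_\alpha+o(1))\rho_i/\e$. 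Summing over $i$ and letting $\e\to0$, then $\sigma\to0$, yields $\liminf_\e E^\alpha_\e(u^\e)\ge \tau_\alpha\,\mathcal H^1(\partial A)$. The one point to check carefully is that a crossing path of $\widetilde A_\e$ inside $R_i$ really lies in $\V_\alpha$: this holds because $\partial V_\e(u^\e)$, and hence $\partial A_\e$, is by hypothesis made of edges of $\alpha$-regular cells, and the modification only removes cells $C^\pm_j$ with $j\in\N_\alpha$, so the new boundary is again in $\V_\alpha$.

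For the upper bound I would again mimic the construction in Section \ref{s_peop_st}: by density \cite{GCB,LN98} it suffices to treat a bounded polyhedron $A$ with connected boundary; writing $\partial A=\bigcup_j[x_{j-1},x_j]$, I subdivide each segment into $m$ pieces and, using Proposition \ref{sute-alfa} applied to $m_\alpha$, choose non-self-intersecting paths $\{e^{j,l}_k\}$ in $\V_\alpha$ between $\pi_\alpha(x^\e_{j,l-1})$ and $\pi_\alpha(x^\e_{j,l})$ with $\#\{e^{j,l}_k\}\le \frac1{m\e}|x_j-x_{j-1}|(\tau_\alpha+\delta)$; set $B^{\delta,m}_\e=\e\bigcup_{j,l,k}e^{j,l}_k$, let $A^{\delta,m}_\e$ be the complement of the unbounded component of $\rr^2\setminus B^{\delta,m}_\e$, and define $u^\e$ to be $+1$ on $A^{\delta,m}_\e\cap\e\N$ and $-1$ elsewhere. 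Because the recovery boundary is contained in $B^{\delta,m}_\e\subset\V_\alpha$, the competitor $u^\e$ is admissible for $E^\alpha_\e$ and $E^\alpha_\e(u^\e)\le\mathcal H^1(\partial A)(\tau_\alpha+\delta)$; the analogue of Proposition \ref{est-dist} (valid for $\V_\alpha$ with the same proof) shows each point of $\e\{e^{j,l}_k\}$ is within $C/m$ of the corresponding segment, so $A^{\delta,m}_\e\to A^{\delta,m}$ with $\partial A^{\delta,m}$ in a $C/m$-neighbourhood of $\partial A$; letting $m\to\infty$ and using lower semicontinuity of the $\Gamma$-$\limsup$ \cite{GCB} gives $\Gamma\hbox{-}\limsup_\e E^\alpha_\e(A)\le\mathcal H^1(\partial A)(\tau_\alpha+\delta)$, and then $\delta\to0$. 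The main obstacle is essentially bookkeeping rather than a new idea: one must verify at each step that staying inside the $\alpha$-cluster does not break any of the geometric constructions — in particular that the channel paths and the crossing-path extraction can be carried out entirely within $\V_\alpha$, and that Proposition \ref{sute-alfa} indeed supplies the uniform convergence of $m_\alpha$ needed in the blow-up estimate — but all of these are guaranteed by Proposition \ref{conn-alfa}, the channel property of $\N_\alpha$, and Proposition \ref{sute-alfa}, so no genuinely new difficulty arises compared with the proof of Theorem \ref{HoT}.
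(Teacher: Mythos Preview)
Your proposal is correct and follows essentially the same approach as the paper's own proof, which simply refers back to the argument for Theorem \ref{HoT} with the obvious substitutions. The one point worth noting is that the paper observes Proposition \ref{est-dist} is \emph{not} needed for the upper bound in the $\alpha$-cluster case: since every edge in $\V_\alpha$ has diameter at most $1/\alpha$, a path of at most $tM$ edges trivially stays within distance $tM/\alpha$ of its starting point, so the localization of the recovery paths near $\partial A$ is immediate and does not require Pimentel's lemma.
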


\begin{proof}
The proof is the same as for the homogenization theorem in the previous section, taking care of using the same $\alpha$ as the one labeling the energies in the proof of the lower bound. Conversely, for the proof of the upper inequality, it is not necessary to use Proposition \ref{est-dist}.
\end{proof}

\begin{proposition}\label{sute}
We have
$%\tau :=
\inf\limits_{\alpha<\alpha_0}\tau_\alpha
=\lim\limits_{\alpha\to0}\tau_\alpha$.
\end{proposition}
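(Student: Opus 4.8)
The statement $\inf_{\alpha<\alpha_0}\tau_\alpha=\lim_{\alpha\to0}\tau_\alpha$ is equivalent to monotonicity of $\alpha\mapsto\tau_\alpha$: the limit as $\alpha\to0$ of a monotone family coincides with its infimum. So the plan is to show that $\tau_\alpha$ is non-increasing in $\alpha$, i.e.\ that $\alpha_1<\alpha_2<\alpha_0$ implies $\tau_{\alpha_1}\le\tau_{\alpha_2}$. The key observation is an \emph{inclusion of admissible sets}: if $\alpha_1<\alpha_2$ then every $\alpha_2$-regular Voronoi cell is also $\alpha_1$-regular (containing a ball of radius $\alpha_2$ implies containing one of radius $\alpha_1$; $\mathrm{diam}\,C_i\le1/\alpha_2$ implies $\le1/\alpha_1$; and $\#$edges$\,\le1/\alpha_2$ implies $\le1/\alpha_1$), so $\N^0_{\alpha_2}\subset\N^0_{\alpha_1}$. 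Taking infinite connected components, one gets $\N_{\alpha_2}\subset\N_{\alpha_1}$ (the infinite component of $\N^0_{\alpha_2}$ sits inside some infinite component of $\N^0_{\alpha_1}$, which by Proposition \ref{conn-alfa} is the unique one $\N_{\alpha_1}$), and correspondingly $\V_{\alpha_2}\subset\V_{\alpha_1}$ and $\N^*_{\alpha_2}\subset\N^*_{\alpha_1}$.

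With this inclusion in hand, I would compare the passage-time functions $m_{\alpha_1}$ and $m_{\alpha_2}$. Any path in $\V_{\alpha_2}$ is in particular a path in $\V_{\alpha_1}$, so if the endpoints were the same we would get $m_{\alpha_1}(x,y)\le m_{\alpha_2}(x,y)$ immediately. The only subtlety is that $m_\alpha(x,y)$ connects $\pi_\alpha(x)$ to $\pi_\alpha(y)$, and $\pi_{\alpha_1}(x)\ne\pi_{\alpha_2}(x)$ in general since the two vertex sets differ. To fix this, given a near-optimal path for $m_{\alpha_2}(x,x+tv)$ from $\pi_{\alpha_2}(x)$ to $\pi_{\alpha_2}(x+tv)$, I prepend and append short connecting paths in $\V_{\alpha_1}$ from $\pi_{\alpha_1}(x)$ to $\pi_{\alpha_2}(x)$ and from $\pi_{\alpha_2}(x+tv)$ to $\pi_{\alpha_1}(x+tv)$. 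The point $\pi_{\alpha_2}(x)$ is a vertex of $\N^*_{\alpha_2}\subset\N^*_{\alpha_1}$, and by the channel/connectivity properties of $\N_{\alpha_1}$ (Lemma \ref{chalpha} and the remark following Definition \ref{clusa}) these two points are joined within $\V_{\alpha_1}$ by a path whose length is $O(|\pi_{\alpha_1}(x)-\pi_{\alpha_2}(x)|)+o(t)$ a.s.; since both projection points lie within a bounded distance of $x$ (controlled uniformly via the Borel--Cantelli estimates already used in the proof of Proposition \ref{unif}), these correction terms are $o(t)$. Dividing by $t$ and letting $t\to\infty$, Proposition \ref{sute-alfa} gives $\tau_{\alpha_1}\le\tau_{\alpha_2}$.

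The main obstacle is exactly this endpoint-matching: one must verify that the discrepancy between $\pi_{\alpha_1}$ and $\pi_{\alpha_2}$ contributes only a sublinear (in $t$) correction, uniformly along a sequence $t\to\infty$. This requires an a.s.\ bound on how far $\pi_\alpha(x)$ can be from $x$ — equivalently, that holes in the $\alpha$-cluster near $x$ have diameter $o(t)$ — which is precisely the kind of estimate established in the proof of Proposition \ref{unif} (diameter of complementary $l^\infty$-connected components bounded by $c\log t$) and can be invoked verbatim for both $\alpha_1$ and $\alpha_2$. Once monotonicity is secured, the identity $\inf_{\alpha<\alpha_0}\tau_\alpha=\lim_{\alpha\to0}\tau_\alpha$ is immediate, completing the proof. \qed
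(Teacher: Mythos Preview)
Your proposal is correct and follows essentially the same route as the paper: reduce the statement to monotonicity $\tau_{\alpha_1}\le\tau_{\alpha_2}$ for $\alpha_1<\alpha_2$, obtain the inclusion $\V_{\alpha_2}\subset\V_{\alpha_1}$ so that $\alpha_2$-paths are admissible for $\alpha_1$, and absorb the mismatch between $\pi_{\alpha_1}$ and $\pi_{\alpha_2}$ by a sublinear correction coming from the $c\log t$ bound on complementary components (as in the proof of Proposition~\ref{unif}). The only cosmetic difference is that the paper routes the endpoint estimate through the fixed coarsest cluster $\V_{\alpha_0}$ (noting $\N^*_{\alpha_0}\subset\N^*_{\alpha_2}\subset\N^*_{\alpha_1}$) rather than invoking the bound separately for $\alpha_1$ and $\alpha_2$; either way the correction is $o(t)$.
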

\begin{proof}
Choose $\alpha_0>0$ in such a way that for some $L$ and $K$ we have
%\eqref{prob_cal_e}
$$
\mathbf{P}(\mathcal{E}(L,K, \alpha_0,j))>p_{\rm cr}.
$$
 It suffices to show that $\tau_{\alpha_1}\leq \tau_{\alpha_2}$,
if $0<\alpha_1<\alpha_2\leq \alpha_0$.  Since $\mathcal{N}^*_{\alpha_2}\subset \mathcal{N}^*_{\alpha_1}$, then
\begin{equation}\label{mixed_ineq}
\begin{array}{c}
\displaystyle
\min\big\{\#\{e_k\}: \{e_k\} \hbox{ is a path in $\V_{\alpha_1}$ connecting $\pi_{\alpha_2}(x_0)$ and $\pi_{\alpha_2}(x_t)$}\big\}\\
\displaystyle
\leq m_{\alpha_2}(x_0,x_t),
\end{array}
\end{equation}
where  $x_0=0$ and $x_t=(t,0)$.
We should estimate
$$
\min\big\{\#\{e_k\}: \{e_k\} \hbox{ is a path in $\V_{\alpha_1}$ connecting $\pi_{\alpha_1}(x_t)$ and $\pi_{\alpha_2}(x_t)$}\big\}.
$$
To this end we consider the cubes $Q_{5L}+10Lj$, $j\in\mathbb Z^2$, that were introduced in the proof of Lemma \ref{chalpha}
and take those of them that satisfy conditions $(\mathbf{c}_1)$--$(\mathbf{c}_3)$ for $\alpha=\alpha_0$.
 Under our choice of  $\alpha_0$ a.s.~these exists a unique infinite cluster of such cubes. The complement to the  infinite cluster
 consists of connected bounded sets. Moreover, according to \cite{Grim99}, for sufficiently large $t$ the maximal size of the connected
 components in the   complement to the  infinite cluster that have a non-trivial intersection with $[-2t,2t]^2]$ does not exceed
 $c\log(t)$.  This implies that the size of the maximal connected component of $[-2t,2t]^2]\setminus \mathcal{V}_{\alpha_0}$
 does not exceed $c\log(t)$.
  Since $\mathcal{N}^*_{\alpha_0}\subset\mathcal{N}^*_{\alpha_2}\subset\mathcal{N}^*_{\alpha_1}$, then $\pi_{\alpha_1}(x_t)$ and $\pi_{\alpha_2}(x_t)$ belong to the closure of the same connected component of $[-2t,2t]^2]\setminus \mathcal{V}_{\alpha_0}$.
  Therefore,
 $$
\lim\limits_{t\to\infty} \frac1t\min\big\{\#\{e_k\}: \{e_k\} \hbox{ is a path in $\V_{\alpha_1}$ connecting $\pi_{\alpha_1}(x_t)$ and $\pi_{\alpha_2}(x_t)$}\big\}=0.
 $$
 Similarly,
 $$
\lim\limits_{t\to\infty} \frac1t\min\big\{\#\{e_k\}: \{e_k\} \hbox{ is a path in $\V_{\alpha_1}$ connecting $\pi_{\alpha_1}(x_0)$ and $\pi_{\alpha_2}(x_0)$}\big\}=0.
 $$
 Combining these two relations with \eqref{mixed_ineq} we obtain the required inequality $\tau_{\alpha_1}\leq \tau_{\alpha_2}$.
\end{proof}

\subsection*{Acknowledgments.}
The authors acknowledge the MIUR Excellence Department Project awarded to the Department of Mathematics, University of Rome Tor Vergata, CUP E83C18000100006.

\end{document}